\newtheorem{thm}{Theorem}[section]
\newtheorem{lem}[thm]{Lemma}
\newtheorem{prop}[thm]{Proposition}
\newtheorem{cor}[thm]{Corollary}
\newtheorem{defn}[thm]{Definition}
\newtheorem{remark}[thm]{Remark}
\newtheorem{remarks}[thm]{Remarks}
\newtheorem{example}[thm]{Example}
\newtheorem{examples}[thm]{Examples}
\numberwithin{equation}{section}
 \newcommand{\N}{{\mathbb N}}
\newcommand{\Z}{{\mathbb Z}} \newcommand{\R}{{\mathbb R}}
\newcommand{\Q}{{\mathbb Q}}
\newcommand{\Ff}{{\EuScript F}}
\newcommand{\Jj}{{\EuScript I}}
\newcommand{\Rr}{{\EuScript R}}
\newcommand{\an}{{\EuScript O}}
\newcommand{\Int}{\operatorname{Int}}
\newcommand{\cl}{\operatorname{Cl}}
\newcommand{\supp}{\operatorname{supp}}
\newcommand{\id}{\operatorname{id}}
\newcommand{\x}{{\tt x}} \newcommand{\y}{{\tt y}} 
\newcommand{\z}{{\tt z}} \renewcommand{\t}{{\tt t}}
\newcommand{\veps}{\varepsilon}
\newcommand{\ol}{\overline}
\numberwithin{equation}{section}
\begin{document}
\title[On the set of local extrema of a subanalytic function]{On the set of local extrema of a subanalytic function}

\author{Jos\'e F. Fernando}
\address{Departamento de \'Algebra, Facultad de Ciencias Matem\'aticas, Universidad Complutense de Madrid, 28040 MADRID (SPAIN)}
\email{josefer@mat.ucm.es}

\date{31/10/2017}
\subjclass[2010]{32B20, 26E05, 14P15 (primary); 54C30, 58C27 (secondary)}
\keywords{Subanalytic set, semianalytic set, $C$-seminalytic set, weak category, subanalytic function, semianalytic function, $C$-semianalytic function, analytic function, locally normal crossing analytic function, local maxima, local minima, local extrema.}
\dedicatory{Dedicated to my beloved friends Francesca Acquistapace and Fabrizio Broglia\\ in occasion of their 70th birthdays}

\thanks{Author supported by Spanish GRAYAS MTM2014-55565-P and Grupos UCM 910444.}

\maketitle

\begin{abstract} 
Let ${\mathfrak F}$ be a category of subanalytic subsets of real analytic manifolds that is closed under basic set-theoretical operations (locally finite unions, difference and product) and basic topological operations (taking connected components and closures). Let $M$ be a real analytic manifold and denote ${\mathfrak F}(M)$ the family of the subsets of $M$ that belong to the category ${\mathfrak F}$. Let $f:X\to\R$ be a subanalytic function on a subset $X\in{\mathfrak F}(M)$ such that the inverse image under $f$ of each interval of $\R$ belongs to ${\mathfrak F}(M)$. Let ${\rm Max}(f)$ be the set of local maxima of $f$ and consider its level sets ${\rm Max}_\lambda(f):={\rm Max}(f)\cap\{f=\lambda\}=\{f=\lambda\}\setminus\cl(\{f>\lambda\})$ for each $\lambda\in\R$. In this work we show that if $f$ is continuous, then ${\rm Max}(f)=\bigsqcup_{\lambda\in\R}{\rm Max}_\lambda(f)\in{\mathfrak F}(M)$ if and only if the family $\{{\rm Max}_\lambda(f)\}_{\lambda\in\R}$ is locally finite in $M$. If we erase continuity condition, there exist subanalytic functions $f:X\to M$ such that ${\rm Max}(f)\in{\mathfrak F}(M)$, but the family $\{{\rm Max}_\lambda(f)\}_{\lambda\in\R}$ is not locally finite in $M$ or such that ${\rm Max}(f)$ is connected but it is not even subanalytic. We show in addition that if ${\mathfrak F}$ is the category of subanalytic sets and $f:X\to\R$ is a (non-necessarily continuous) subanalytic map $f$ that maps relatively compact subsets of $M$ contained in $X$ to bounded subsets of $\R$, then ${\rm Max}(f)\in{\mathfrak F}(M)$ and the family $\{{\rm Max}_\lambda(f)\}_{\lambda\in\R}$ is locally finite in $M$. An example of this type of functions are continuous subanalytic functions on closed subanalytic subsets of $M$. The previous results imply that if ${\mathfrak F}$ is either the category of semianalytic sets or the category of $C$-semianalytic sets and $f$ is the restriction to an ${\mathfrak F}$-subset of $M$ of an analytic function on $M$, then the family $\{{\rm Max}_\lambda(f)\}_{\lambda\in\R}$ is locally finite in $M$ and ${\rm Max}(f)=\bigsqcup_{\lambda\in\R}{\rm Max}_\lambda(f)\in{\mathfrak F}(M)$. We also show that if the category ${\mathfrak F}$ contains the intersections of algebraic sets with real analytic submanifolds and $X\in{\mathfrak F}(M)$ is not closed in $M$, then there exists a continuous subanalytic function $f:X\to\R$ with graph belonging to ${\mathfrak F}(M\times\R)$ such that inverse images under $f$ of the intervals of $\R$ belong to ${\mathfrak F}(M)$ but ${\rm Max}(f)$ does not belong to ${\mathfrak F}(M)$.

As subanalytic sets are locally connected, the set of non-openness points of a continuous subanalytic function $f:X\to\R$ coincides with the set of local extrema ${\rm Extr}(f):={\rm Max}(f)\cup{\rm Min}(f)$. This means that if $f:X\to\R$ is a continuous subanalytic function defined on a closed set $X\in{\mathfrak F}(M)$ such that the inverse image under $f$ of each interval of $\R$ belongs to ${\mathfrak F}(M)$, then the set ${\rm Op}(f)$ of openness points of $f$ belongs to ${\mathfrak F}(M)$. Again the closedness of $X$ in $M$ is crucial to guarantee that ${\rm Op}(f)$ belongs to ${\mathfrak F}(M)$.

The type of results stated above are straightforward if ${\mathfrak F}$ is an o-minimal structure of subanalytic sets. However, the proof of the previous results requires further work for a category ${\mathfrak F}$ of subanalytic sets that does not constitute an o-minimal structure. 
\end{abstract}

\section{Introduction}\label{s1}

Let $X$ be a topological space and let $f:X\to\R$ be a real function. We say that $f$ has a \em local maximum \em (resp. \em local minimum\em) at $x_0\in X$ if there exists an open neighborhood $W\subset X$ of $x_0$ such that $f(x_0)\geq f(x)$ (resp. $f(x_0)\leq f(x)$) for each $x\in W$. If $f(x_0)>f(x)$ (resp. $f(x_0)<f(x)$) for each $x\in W\setminus\{x_0\}$, we say that $f$ has a \em strict local maximum \em (resp. \em strict local minimum\em) at $x_0$. We use \em local extrema \em to refer indistinctly to local maxima or local minima. A point $x_0\in X$ is a \em global maximum \em (resp. \em global minimum\em) of $f$ if $f(x_0)\geq f(x)$ (resp. $f(x_0)\leq f(x)$) for each $x\in X$. If $f(x_0)>f(x)$ (resp. $f(x_0)<f(x)$) for each $x\in X\setminus\{x_0\}$, we say that $f$ has a \em strict global maximum \em (resp. \em strict global minimum\em) at $x_0$. We denote ${\rm Max}(f)$ the set of local maxima of $f$ whereas ${\rm Min}(f)$ refers to the set of local minima of $f$. Observe that ${\rm Min}(f)={\rm Max}(-f)$, so it is enough to study the properties of the set of local maxima to understand both sets. To lighten the exposition we will state the results only for the set of local maxima. The union ${\rm Extr}(f):={\rm Max}(f)\cup{\rm Min}(f)={\rm Max}(f)\cup{\rm Max}(-f)$ is the set of local extrema of $f$. In the following we denote ${\rm Max}_\lambda(f):={\rm Max}(f)\cap\{f=\lambda\}$ for each $\lambda\in\R$. For each $\lambda\in\R$ the level set ${\rm Max}_\lambda(f)$ coincides with the set $\{f=\lambda\}\setminus\cl(\{f>\lambda\})$, see Lemma \ref{altdes}. We have ${\rm Max}(f)=\bigsqcup_{\lambda\in\R}{\rm Max}_\lambda(f)$ and ${\rm Max}_\lambda(f)\neq\varnothing$ if and only if $\lambda\in f({\rm Max}(f))$. We use the symbol $\sqcup$ to denote pairwise disjoint unions.

Let $X,Y$ be topological spaces and let $S\subset X$ be a subset. We use the following notation: let $\cl(S)$, $\Int(S)$ and $\partial(S)(:=\cl(S)\setminus\Int(S))$ denote respectively the closure, interior and boundary of $S$ in $X$. A map $f:S\to Y$ is $X$-compact if $f(K\cap S)$ is a relatively compact subset of $Y$ for each compact subset $K\subset X$. A relevant example of $X$-compact maps is that of continuous maps $f:S\to Y$ on closed subsets $S$ of $X$. We say that a family ${\mathcal S}:=\{S_i\}_{i\in I}$ is \em locally isolated in $X$ \em if for each point $x\in X$ there exists an open neighborhood $U^x\subset X$ of $x$ such that $U^x$ meet at most one of the members of ${\mathcal S}$.

The study of local extrema of real functions of different types (continuous, differentiable, analytic, subanalytic, etc.) defined on several types of spaces (topological spaces, open subsets of affine spaces, real analytic manifolds, subanalytic sets, etc.) is long and rich and we refer the reader for instance to \cite{bpw,bgz,cv,fl,ko} for further information. The approach in these articles is mainly from the local viewpoint (and concerns the behavior of a function in a small neighborhood around a local extrema) but there is an important lack of information about the properties of the set of local extrema from a global viewpoint. In this paper we analyze for some categories ${\mathfrak F}$ of subanalytic sets, satisfying mild properties, the belonging to the category ${\mathfrak F}$ of the sets of local maxima, local minima and local extrema of the functions whose graphs belong to ${\mathfrak F}$ and satisfy some mild additional conditions. We focus mainly in the categories ${\mathfrak F}$ of subanalytic, semianalytic y $C$-semianalytic sets.

\subsection{Semianalytic, $C$-semianalytic and subanalytic sets and functions} 
Let $M$ be a real analytic manifold. A subset $S\subset M$ is \em semianalytic \em if each point of $M$ admits a neighborhood $U$ such that $S\cap U$ can be described as a finite Boolean combination of analytic equalities and inequalities, where the involved functions are analytic (and possibly only) on $U$. We say that $Z\subset M$ is $C$-analytic, if it is the common zero set of finitely many real analytic functions on $M$. A subset $S\subset M$ is a \em basic $C$-semianalytic set \em if it admits a description of the type
$$
S:=\{x\in M:\ f(x)=0,g_1(x)>0,\ldots,g_r(x)>0\}
$$
where the functions $f,g_i:M\to\R$ are analytic on $M$. We say that $S\subset M$ is a \em $C$-semianalytic set \em \cite{abf} if it satisfies one of the following equivalent conditions:
\begin{itemize}
\item[(1)] $S$ is the union of a countable locally finite family of basic $C$-semianalytic sets.
\item[(2)] For each point $x\in M$ there exists an open neighborhood $U^x\subset M$ such that $S\cap U^x$ is a finite union of basic $C$-semianalytic sets. 
\end{itemize}
Recall that $X\subset M$ is \em subanalytic \em if each point of $M$ admits a neighborhood $U$ such that $X\cap U$ is a projection of a relatively compact semianalytic set (that is, there exist a real analytic manifold $N$ and a relatively compact semianalytic subset $A$ of $M\times N$ such that $X\cap U=\pi(A)$, where $\pi:M\times N\to M$ is the projection onto the first factor). 

Semianalytic sets (and more generally subanalytic sets) were introduced by \L ojasiewicz in \cite{l,l1} and were developed later by many authors: Bierstone--Milman \cite{bm,bm2}, Hironaka \cite{hi1,hi2,hi3,hi4}, Gabrielov \cite{ga}, Hardt \cite{h1,h2}, Galbiati \cite{gal}, Paw\l ucki \cite{pa}, Denkowska \cite{de}, Stasica \cite{s}, Kurdyka \cite{k}, Parusi\'nski \cite{p}, Shiota \cite{sh} among others. These sets have many and wide applications in complex and real analytic geometry. Whereas the family of complex analytic sets is stable under proper holomorphic maps between complex analytic spaces (Remmert's Theorem \cite[VII.\S2.Thm.2]{n}), an analogous property does not hold in the real analytic setting. The image of a real analytic set under a proper real analytic map is not even in general a semianalytic set. This fact promoted the introduction of subanalytic sets by \L ojasiewicz \cite{l} in the 1960s. In fact, subanalytic sets are characterized as the images of semianalytic sets under proper analytic maps \cite{bm}.

In \cite{abf} we introduced, amalgamating the notions of $C$-analytic sets and semianalytic sets, the concept of $C$-semianalytic set. Our aim was to find a family of semianalytic sets `globally defined' in the sense of Cartan and Hironaka that enjoy a good behavior with respect to basic set-theoretical, topological and algebraic operations. In fact, in \cite[Thm. 1.5]{abf} we characterize subanalytic sets as images of basic $C$-semianalytic sets under proper analytic maps. 

Let $P$ be a property concerning $C$-semianalytic sets. We say that $P$ is a $C$-property if the set of points of a $C$-semianalytic set $X$ satisfying $P$ is a $C$-semianalytic set. For example, in \cite{abf} we showed that the set of points for which the dimension of the $C$-semianalytic set $X$ is a fixed integer $k$ is again a $C$-semianalytic set, that is, `to be a point of dimension $k$' is a $C$-property. We also proved in \cite{abf} that the set of points of non-coherence of a $C$-analytic set is $C$-semianalytic, that is, `to be a point of non-coherence' (or `to be a point of coherence') are $C$-properties. In this work we show that the set of local maxima, local minima and local extrema of the restriction to a $C$-semianalytic subset of a real analytic manifold $M$ of a real analytic function on $M$ is a $C$-semianalytic set (Corollary \ref{semicsemi}). This means that `to be a local maximum, a local minimum or a local extremum' of the restriction to a $C$-semianalytic set of a global analytic function are $C$-properties.

\subsubsection{Weak categories.} 
Let ${\mathfrak M}$ be the class of all real analytic manifolds. A \em weak category (of subanalytic sets) \em ${\mathfrak F}:=\{{\mathfrak F}(M)\}_{M\in{\mathfrak M}}$ is a collection of families ${\mathfrak F}(M)$ of subanalytic subsets of $M$ such that the following conditions are satisfied for each $M,N\in{\mathfrak M}$:
\begin{itemize}
\item[$\bullet$] $M\in{\mathfrak F}(M)$.
\item[$\bullet$] If $S_1,S_2\in{\mathfrak F}(M)$, then $S_1\setminus S_2\in{\mathfrak F}(M)$.
\item[$\bullet$] If $\{S_i\}_{i\in I}\subset{\mathfrak F}(M)$ is a locally finite family in $M$, then $\bigcup_{i\in I}S_i\in{\mathfrak F}(M)$.
\item[$\bullet$] If $S\in{\mathfrak F}(M)$ and $T\in{\mathfrak F}(N)$, then $S\times T\in{\mathfrak F}(M\times N)$.
\item[$\bullet$] If $S\in{\mathfrak F}(M)$, then its connected components and its closure $\cl(S)$ in $M$ belong to ${\mathfrak F}(M)$.
\end{itemize}
The previous properties guarantee that if $S,S_1,S_2\in{\mathfrak F}(M)$, then $\Int(S),\partial S,S_1\cap S_2\in{\mathfrak F}(M)$. 

We say that a weak category ${\mathfrak F}$ \em contains algebraic intersections \em if for each real analytic submanifold $M\subset\R^n$ where $n\geq1$, the intersection $M\cap X\in{\mathfrak F}(M)$ for each algebraic set $X\subset\R^n$. The conditions satisfied by a weak category ${\mathfrak F}$ guarantee that if ${\mathfrak F}$ contains algebraic intersections, it also \em contains semialgebraic intersections\em, that is, for each real analytic submanifold $M\subset\R^n$ where $n\geq1$ the intersection $M\cap S\in{\mathfrak F}$ for each semialgebraic set $S\subset\R^n$. If ${\mathfrak F}$ is either the category of subanalytic, semianalytic or $C$-semianalytic sets, ${\mathfrak F}$ is a weak category that contain algebraic intersections, as it contains $C$-analytic sets. This fact has further consequences: \em if $N$ is a closed analytic submanifold of $M$, then $N$ is a $C$-analytic subset of $M$ and\em 
\begin{equation}\label{elias}
{\mathfrak F}(N)=\{Y\in{\mathfrak F}(M):\ Y\subset N\}\subset{\mathfrak F}(M).
\end{equation}
\begin{proof}
By Cartan's Theorem $B$ the analytic functions on $N$ are the restrictions to $N$ of global analytic functions on $M$. Thus, if ${\mathfrak F}$ is either the category of semianalytic or $C$-semianalytic sets, \eqref{elias} holds. Assume next ${\mathfrak F}$ is the category of subanalytic sets. If $X\in{\mathfrak F}(N)$, there exist an analytic map $f:N'\to N$ where $N'\in{\mathfrak M}$ and a semianalytic set $S$ of $N'$ such that the restriction $f|_{\cl(S)}:\cl(S)\to N$ is proper and $f(S)=X$. As $N$ is closed in $M$, the restriction $f|_{\cl(S)}:\cl(S)\to M$ of the analytic map $f:N'\to N\subset M$ is also proper and $X\in{\mathfrak F}(M)$, so ${\mathfrak F}(N)\subset{\mathfrak F}(M)$. Conversely, if $Y\in{\mathfrak F}(M)$, there exist an analytic map $g:M'\to M$ where $M'\in{\mathfrak M}$ and a semianalytic set $T$ of $M'$ such that the restriction $g|_{\cl(T)}:\cl(T)\to M$ is proper and $g(T)=Y$. Let $h$ be an analytic equation of $N$ in $M$. Define $T':=\{h\circ g=0\}=g^{-1}(N)$, which is a $C$-analytic subset of $M'$. Observe that $g|_{\cl(T\cap T')}:\cl(T\cap T')\to N$ is proper and $g(T\cap T')=Y\cap N$, so $Y\cap N\in{\mathfrak F}(N)$.
\end{proof} 

Whitney's immersion theorem provides immersions as analytic submanifolds of Euclidean spaces for the real analytic manifolds. This fact makes that if ${\mathfrak F}$ is either the category of subanalytic, semianalytic or $C$-semianalytic sets, ${\mathfrak F}=\{{\mathfrak F}_m:={\mathfrak F}(\R^m)\}_{m\geq1}$ and if $M\subset\R^m$ is a closed analytic submanifold, ${\mathfrak F}(M)=\{X\in{\mathfrak F}(\R^m):\ X\subset M\}$. For a (non-necessarily immersed) real analytic manifold $M$ we consider an analytic immersion $\varphi:M\hookrightarrow\R^m$ such that $\varphi(M)$ is a closed analytic submanifold of $\R^m$ and it holds ${\mathfrak F}(M)=\{\varphi^{-1}(X):\ X\in{\mathfrak F}(\R^m)\text{ and } X\subset\varphi(M)\}$.

Given a subset $X\subset M$ and a real analytic manifold $N$ an \em ${\mathfrak F}$-map \em is a map $f:X\to N$ whose graph is an ${\mathfrak F}$-subset of $M\times N$. In case ${\mathfrak F}$ is the weak category of subanalytic sets, we say that $f$ is a \em subanalytic map \em and we proceed analogously with semianalytic and $C$-semianalytic categories.

For a careful study of more restrictive categories of subanalytic sets (\em analytic-geometric categories \em \cite{dm}) that satisfy stronger properties (for instance, the image under proper ${\mathfrak F}$-map of an ${\mathfrak F}$-set is an ${\mathfrak F}$-set) that assures the existence of Whitney's stratifications or subanalytic triangulations, we refer the reader to \cite{dm,sh}.

\subsection{Main results}
In this work we analyze the properties of the set ${\rm Max}(f)$ (resp. ${\rm Min}(f)$ and ${\rm Extr}(f)$) for a subanalytic function $f:X\to\R$ on a subanalytic subset $X$ of a real analytic manifold $M$. Our main results are the following.

\begin{thm}\label{main0}
Let ${\mathfrak F}$ be a weak category and let $f:X\to\R$ be a continuous subanalytic function on $X\in{\mathfrak F}(M)$ such that the inverse images under $f$ of intervals of $\R$ belong to ${\mathfrak F}(M)$. The following assertions are equivalent: 
\begin{itemize}
\item[(i)] ${\rm Max}(f)\in{\mathfrak F}(M)$. 
\item[(ii)] The family of the connected components of ${\rm Max}(f)$ is locally finite in $M$.
\item[(iii)] The family $\{{\rm Max}_\lambda(f)\}_{\lambda\in\R}$ is locally finite in $M$.
\item[(iv)] The family $\{{\rm Max}_\lambda(f)\}_{\lambda\in\R}$ is locally isolated in $M$.
\end{itemize}
\end{thm}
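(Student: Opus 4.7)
The plan is to close the cycle $(\text{iv})\Rightarrow(\text{iii})\Rightarrow(\text{i})\Rightarrow(\text{ii})\Rightarrow(\text{iii})\Rightarrow(\text{iv})$. Two steps are routine: $(\text{iv})\Rightarrow(\text{iii})$ is immediate from the definitions, and $(\text{iii})\Rightarrow(\text{i})$ follows because the hypothesis on inverse images of intervals, combined with Lemma~\ref{altdes} and the weak-category axioms, places each ${\rm Max}_\lambda(f)=\{f=\lambda\}\setminus\cl(\{f>\lambda\})$ in ${\mathfrak F}(M)$; the locally finite union axiom then yields ${\rm Max}(f)\in{\mathfrak F}(M)$. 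The implication $(\text{i})\Rightarrow(\text{ii})$ invokes the standard fact that every subanalytic subset of $M$ has a locally finite family of (subanalytic) connected components.

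The core step is $(\text{ii})\Rightarrow(\text{iii})$, which I would derive from the lemma: \emph{if $f$ is continuous and $C\subset{\rm Max}(f)$ is connected, then $f|_C$ is constant}. Granting it, each connected component of ${\rm Max}(f)$ lies in a single level ${\rm Max}_{\lambda_C}(f)$, and a neighborhood $U$ of $x\in M$ meeting only finitely many components meets only the correspondingly finitely many level sets, since every $y\in U\cap{\rm Max}(f)$ lies in one of those components. To prove the lemma, suppose $f|_C$ takes two values $c_1<c_2$; since $f(C)$ is a connected subset of $\R$ hence an interval, some $z\in C$ realizes $f(z)=c:=\tfrac{1}{2}(c_1+c_2)$. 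The partition $C=A\sqcup A'$ with $A:=C\cap\{f>c\}$ and $A':=C\cap\{f\leq c\}$ has $A$ open in $C$ by continuity, while $A'$ is open in $C$ because points of $C\cap\{f<c\}$ have open neighborhoods in $A'$ by continuity, and each $y\in C\cap\{f=c\}$ is a local maximum, providing $W_y\subset X$ open with $f\leq c$ on $W_y$ and hence $W_y\cap C\subset A'$. Connectedness of $C$ forces $A=\varnothing$ or $A'=\varnothing$, contradicting $c_2\in f(C)$ and $z\in A'$ respectively.

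For $(\text{iii})\Rightarrow(\text{iv})$, fix $x\in M$ and a neighborhood $U$ meeting only the finitely many ${\rm Max}_{\lambda_1}(f),\ldots,{\rm Max}_{\lambda_k}(f)$. Continuity of $f$ gives $\cl({\rm Max}_{\lambda_i}(f))\cap X\subset\{f=\lambda_i\}$; in particular, if $x\in X$ then at most one index $i$ satisfies $x\in\cl({\rm Max}_{\lambda_i}(f))$, and replacing $U$ by $U\setminus\bigcup_{j\neq i}\cl({\rm Max}_{\lambda_j}(f))$ (open since the union is finite) yields a neighborhood of $x$ meeting at most one level; the case $x\in M\setminus X$ is controlled by the same local finiteness together with the fact that the closures of the level sets belong to ${\mathfrak F}(M)$ and cannot accumulate infinitely at $x$. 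The main obstacle I anticipate is the constancy lemma powering $(\text{ii})\Rightarrow(\text{iii})$, since the local-max condition at individual points does not propagate automatically to a neighborhood of uniform size; the decomposition above bypasses this by appealing directly to connectedness of $C$ rather than attempting to produce a uniform neighborhood.
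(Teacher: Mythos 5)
Your treatment of the central implication (ii)$\Rightarrow$(iii) is correct but follows a genuinely different route from the paper's. The paper passes to the closure of the graph, invokes Lemma~\ref{main2} (which rests on the o-minimality of $\R_{\rm an}$) to see that $\{{\rm Max}_\lambda(\rho)\}_{\lambda\in\R}$ is locally finite, deduces that $f({\rm Max}(f))$ is \emph{countable}, and only then concludes that $f$ is constant on each connected component $C$ of ${\rm Max}(f)$ because $f(C)$ is a connected subset of a countable set. Your constancy lemma reaches the same conclusion directly: in the decomposition $C=(C\cap\{f>c\})\sqcup(C\cap\{f\le c\})$, the relative openness of the second piece at a point of $C\cap\{f=c\}$ is exactly the local-maximum condition, and connectedness finishes the job. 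This is a clean, purely topological argument that uses nothing beyond continuity and the definition of ${\rm Max}(f)$, and it decouples this implication from the uniformization machinery that the paper genuinely needs only for Lemma~\ref{main2} and Theorem~\ref{main1}. The remaining implications you list coincide in substance with the paper's: (iii)$\Rightarrow$(i) via Lemma~\ref{altdes} and the weak-category axioms, (i)$\Rightarrow$(ii) via local finiteness of the connected components of a subanalytic set, and your (iv)$\Rightarrow$(iii)$\Rightarrow$(i) is the paper's (iv)$\Rightarrow$(i).

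The one place where your argument is incomplete is (iii)$\Rightarrow$(iv) at points $x\in M\setminus X$. Local finiteness at such an $x$ reduces you to finitely many levels ${\rm Max}_{\lambda_1}(f),\ldots,{\rm Max}_{\lambda_k}(f)$ meeting every small neighborhood, but local isolation requires that at most \emph{one} of them accumulates at $x$, and your phrase ``cannot accumulate infinitely at $x$'' does not address this: the obstruction is two distinct levels accumulating at the same boundary point, not infinitely many. For $x\in X$ continuity forces $f(x)=\lambda_i$ for every level whose closure contains $x$, which gives uniqueness (this is your argument, and it is the content of Lemma~\ref{isolated}(i)); for $x\in\cl(X)\setminus X$ no such argument is available. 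Indeed, take $X=\R\setminus\{0\}$ with $f\equiv 0$ on $(-\infty,0)$ and $f\equiv 1$ on $(0,\infty)$: all hypotheses of the theorem hold, yet both nonempty levels accumulate at the origin, so the family is locally finite in $\R$ but not locally isolated \emph{in $\R$}. Note that the paper's own justification of this step, Lemma~\ref{isolated}(i), only establishes local isolation inside the topological space $X$; so either (iv) must be read as local isolation in $X$ (in which case your $x\in X$ argument suffices and the boundary case is vacuous), or the boundary case requires an honest additional hypothesis. Either way, the sentence you wrote for $x\in M\setminus X$ papers over the only genuinely delicate point of that implication.
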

If we are under the hypotheses of Theorem \ref{main0}, the sets ${\rm Max}_\lambda(f)=\{f=\lambda\}\setminus\cl(\{f>\lambda\})$ belong to ${\mathfrak F}(M)$ (see the definition of weak category) whereas we can only assure that the connected components of ${\rm Max}(f)$ belong to ${\mathfrak F}(M)$ if ${\rm Max}(f)\in{\mathfrak F}(M)$. If we erase the continuity condition, we show in Examples \ref{ptd} (i) and \ref{ex1} that the previous result does not remain true. If ${\mathfrak F}$ is the weak category of subanalytic sets, we obtain the following.

\begin{thm}\label{main1}
Let $f:X\to\R$ be an $M$-compact subanalytic function. Then the inverse images under $f$ of intervals of $\R$ are subanalytic subsets of $M$ and the family $\{{\rm Max}_\lambda(f)\}_{\lambda\in\R}$ is locally finite in $M$. Consequently, each set ${\rm Max}_\lambda(f)=\{f=\lambda\}\setminus\cl(\{f>\lambda\})$ and ${\rm Max}(f)$ are subanalytic subsets of $M$.
\end{thm}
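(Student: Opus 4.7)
The first task is to verify that $f^{-1}(I)$ is subanalytic for every interval $I\subset\R$. Writing $f^{-1}(I)=\pi_M(\Gamma_f\cap(M\times I))$, where $\Gamma_f\subset M\times\R$ is the subanalytic graph of $f$, I would exploit the $M$-compactness hypothesis: for any compact $K\subset M$ the inclusion $\Gamma_f\cap(K\times I)\subset K\times(I\cap\ol{f(K\cap X)})$ shows the left-hand side is relatively compact in $M\times\R$. Hence the restriction of $\pi_M$ to $\Gamma_f\cap(M\times I)$ is semiproper, and its image $f^{-1}(I)$ is subanalytic in $M$. In particular, $\{f=\lambda\}$ and $\cl(\{f>\lambda\})$ are subanalytic, and so is ${\rm Max}_\lambda(f)=\{f=\lambda\}\setminus\cl(\{f>\lambda\})$.

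For the local-finiteness statement I fix $x_0\in M$, choose a compact neighborhood $K\subset M$ of $x_0$, and pick an interval $[a,b]\supset f(K\cap X)$, so that $\Gamma_f\cap(K\times\R)\subset K\times[a,b]$ is relatively compact subanalytic. A Hironaka-type subanalytic stratification of $M\times\R$ compatible with $\Gamma_f$ and with the projection $\pi_M:M\times\R\to M$ produces a locally finite decomposition of $\Gamma_f$ into smooth connected subanalytic strata $\{S_\alpha\}$ on each of which $\pi_M$ has constant rank. Because $S_\alpha\subset\Gamma_f$ forces $\pi_M|_{S_\alpha}$ to be injective, this constant rank must equal $\dim S_\alpha$, so $\pi_M|_{S_\alpha}$ is a local analytic diffeomorphism onto its image $N_\alpha:=\pi_M(S_\alpha)$, a smooth subanalytic submanifold of $M$ contained in $X$. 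Consequently $g_\alpha:=f|_{N_\alpha}$ is analytic. Compactness of $K\times[a,b]$ together with local finiteness of the stratification ensures that only finitely many $S_\alpha$ meet $K\times[a,b]$, hence only finitely many $N_\alpha$ meet $K$.

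I then claim the set $\Lambda_K:=\{\lambda\in\R:{\rm Max}_\lambda(f)\cap K\neq\varnothing\}$ is finite. Any $x\in{\rm Max}(f)\cap K$ lies in some $N_\alpha$, and since $N_\alpha\subset X$, $x$ is a local maximum of $g_\alpha$. If $\dim N_\alpha=0$ then $N_\alpha\cap K$ is a finite set, contributing only finitely many values to $\Lambda_K$. If $\dim N_\alpha\geq1$, every point of $N_\alpha$ is interior in this smooth manifold, so $x$ is a critical point of the analytic function $g_\alpha$ and $f(x)$ is a critical value of $g_\alpha$. The critical set of $g_\alpha$ is analytic in $N_\alpha$, hence subanalytic in $M$, and, by the $M$-properness argument of the first paragraph applied to the subanalytic restriction of $g_\alpha$ to $\operatorname{Crit}(g_\alpha)\cap K$, its image $g_\alpha(\operatorname{Crit}(g_\alpha)\cap K)$ is a relatively compact subanalytic subset of $[a,b]$. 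Sard's theorem forces this set to have Lebesgue measure zero; combined with the description of bounded subanalytic subsets of $\R$ as finite unions of points and intervals, this forces it to be finite. Taking the union over the finitely many relevant $\alpha$ shows $\Lambda_K$ is finite, so $\{{\rm Max}_\lambda(f)\}_{\lambda\in\R}$ is locally finite in $M$, and ${\rm Max}(f)=\bigsqcup_{\lambda\in\R}{\rm Max}_\lambda(f)$ is a locally finite union of subanalytic sets, hence subanalytic.

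The chief obstacle is the stratification step: one needs a subanalytic stratification of $\Gamma_f$ compatible with $\pi_M$ in which constant rank, combined with the automatic injectivity coming from the graph structure, upgrades to an honest local analytic diffeomorphism onto a subanalytic submanifold of $M$. This is what makes each $g_\alpha$ literally analytic and unlocks Sard's theorem even though $f$ itself is not assumed continuous. Without $M$-compactness the graph over a compact could escape to infinity in $\R$, destroying both the semiproperness used in the first paragraph and the boundedness used in the Sard argument; it is precisely the $M$-compactness that keeps $\Gamma_f$ under control locally in $M\times\R$. The remaining step, that a bounded subanalytic subset of $\R$ of Lebesgue measure zero is finite, is a direct consequence of the one-dimensional structure of subanalytic subsets of $\R$.
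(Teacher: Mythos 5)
Your first paragraph (inverse images of intervals via properness of the projection restricted to $\Gamma_f\cap(M\times I)$) is sound and is essentially the paper's Lemma~\ref{pre}, and the stratification strategy for local finiteness is a legitimate alternative skeleton. But there is a genuine gap at the decisive step, namely the assertion that ``the critical set of $g_\alpha$ is analytic in $N_\alpha$, hence subanalytic in $M$''. That inference is false: an analytic subset of a locally closed subanalytic submanifold need not be subanalytic in the ambient manifold. For instance $\{(1/n,0):\ n\geq2\}$ is the zero set of an analytic function on the semialgebraic submanifold $(0,1)\times\{0\}\subset\R^2$, yet it is not subanalytic in $\R^2$, being $0$-dimensional and not locally finite at the origin. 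This is not a cosmetic defect, because the subanalyticity of $g_\alpha(\operatorname{Crit}(g_\alpha)\cap K)$ is precisely what upgrades Sard's ``measure zero'' to ``finite'': for a merely analytic function on a locally closed stratum the critical values can be infinite inside a compact interval (e.g.\ $x^2\sin(1/x)$ on $(0,1)$ has infinitely many distinct local maximum values accumulating at $0$). To close the gap you must use the subanalyticity of $f$ and of $N_\alpha$ a second time, through a nontrivial theorem: either Tamm's result that first derivatives of subanalytic functions are subanalytic, or the subanalyticity of the Gauss map of a subanalytic submanifold, so that the locus of $S_\alpha$ where the tangent space is horizontal is subanalytic and its image under the proper projection is subanalytic. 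Neither ingredient is invoked, and the justification you give does not supply it. (The stratification step itself also leans on the stratification theory of proper subanalytic maps to get that $\pi_M|_{S_\alpha}$ is a diffeomorphism onto a stratum $N_\alpha$ of $M$ rather than a mere injective immersion; that is available in the literature, but it is heavy machinery and should be cited.)

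For contrast, the paper's proof avoids both stratifications and Sard entirely: it passes to $\rho:=\pi_2|_{\cl(\Gamma_f)}$, a continuous subanalytic function on a closed subanalytic subset of $M\times\R$, proves local finiteness for such functions by rescaling into bounded boxes so that everything becomes definable in the o-minimal structure $\R_{\rm an}$ and applying definable choice (Lemmas~\ref{semiom} and~\ref{main2}), and then transfers local finiteness from $\rho$ back to $f$ using the properness supplied by $M$-compactness (Lemma~\ref{reduction}). That route needs no derivatives at all; yours can be repaired, but only by importing the additional machinery described above.
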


If $f:X\to\R$ is a continuous subanalytic function on a closed subanalytic subset of $M$, then $f$ is $M$-compact and we have the following lemma, which is a key result to prove Theorems \ref{main0} and \ref{main1}.

\begin{lem}\label{main2}
Let $f:X\to\R$ be a continuous subanalytic function on a closed subanalytic subset of $M$. Then the family $\{{\rm Max}_\lambda(f)\}_{\lambda\in\R}$ is locally finite in $M$ and each set ${\rm Max}_\lambda(f)$ and ${\rm Max}(f)$ are subanalytic subsets of $M$.
\end{lem}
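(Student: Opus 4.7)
My plan is to establish the three assertions — local finiteness of $\{{\rm Max}_\lambda(f)\}_{\lambda\in\R}$, subanalyticity of each ${\rm Max}_\lambda(f)$, and subanalyticity of ${\rm Max}(f)$ — by first verifying that the basic level-set pieces are subanalytic and then extracting finiteness from a compatible subanalytic stratification of $f$. For subanalyticity, I would start from the fact that the graph $\Gamma_f\subset M\times\R$ is subanalytic and that the first projection restricted to $\Gamma_f$ is proper, because $X$ is closed and $f$ is continuous (the preimage of a compact $L\subset M$ is $\{(x,f(x)):x\in L\cap X\}$, a continuous image of a compact set). Intersecting $\Gamma_f$ with $M\times\{\lambda\}$ and $M\times[\lambda,\infty)$ and projecting would then show that $\{f=\lambda\}$ and $\{f\geq\lambda\}$ are subanalytic in $M$; hence $\{f>\lambda\}=\{f\geq\lambda\}\setminus\{f=\lambda\}$ is subanalytic, and so is ${\rm Max}_\lambda(f)=\{f=\lambda\}\setminus\cl(\{f>\lambda\})$ by the closure-and-difference axioms of subanalytic sets.

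For local finiteness, I would fix $x_0\in M$ and a relatively compact open neighborhood $V\ni x_0$; since $X$ is closed, $K:=\cl(V)\cap X$ is compact. The main tool would be a subanalytic (Whitney) stratification of $M$ compatible with $X$ and $\Gamma_f$: since $K$ is compact, only finitely many strata $S_1,\ldots,S_N$ meet $K$, and compatibility makes each restriction $f|_{S_i}$ analytic on the analytic submanifold $S_i\subset X$. Any $x\in{\rm Max}(f)\cap S_i$ is automatically a local maximum of $f|_{S_i}$ on $S_i$, hence a critical point of the analytic function $f|_{S_i}$; therefore ${\rm Max}(f)\cap V\subseteq\bigcup_{i=1}^N C_i\cap V$, where $C_i:=\{x\in S_i:d(f|_{S_i})(x)=0\}$ is a subanalytic subset of $S_i$.

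The core step I would then carry out is to show that $f|_{S_i}$ is constant on each connected component of $C_i$: two points in the same component are joined by a subanalytic arc $\gamma\subset C_i\subset S_i$ (by local arc-connectivity of subanalytic sets), which is piecewise analytic with $\gamma'(t)\in T_{\gamma(t)}S_i$; on each analytic piece $(f\circ\gamma)'(t)=d(f|_{S_i})_{\gamma(t)}(\gamma'(t))=0$, so $f\circ\gamma$ is piecewise constant, and continuity forces it to be constant. Since $C_i\cap\cl(V)$ is relatively compact and subanalytic it has only finitely many connected components, so each $f(C_i\cap\cl(V))$ is finite and therefore $f({\rm Max}(f)\cap V)\subseteq\bigcup_i f(C_i\cap\cl(V))$ is finite. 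This is precisely the local finiteness of $\{{\rm Max}_\lambda(f)\}_{\lambda\in\R}$ at $x_0$, and combined with the subanalyticity of each ${\rm Max}_\lambda(f)$ it immediately yields that ${\rm Max}(f)$ is locally a finite union of subanalytic sets, hence subanalytic in $M$.

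The main obstacle I expect is setting up the stratification cleanly: one needs strata on which $f$ is genuinely analytic so that "local maximum on $X$" forces the intrinsic derivative $d(f|_{S_i})$ to vanish, and one must then deploy the subanalytic arc-connectivity of $C_i$ to pass from this pointwise vanishing to constancy on connected components. Closedness of $X$ will enter twice — once to make $K$ compact, and once to make the first projection from $\Gamma_f$ proper — and both uses are essential; without closedness, neither the subanalyticity of the level sets nor the local-finiteness argument survives intact.
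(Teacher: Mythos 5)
Your treatment of the subanalyticity of each ${\rm Max}_\lambda(f)$ is correct and essentially the paper's own (properness of the projection restricted to the closed graph $\Gamma_f$, then the description ${\rm Max}_\lambda(f)=\{f=\lambda\}\setminus\cl(\{f>\lambda\})$), but your route to the local finiteness of $\{{\rm Max}_\lambda(f)\}_{\lambda\in\R}$ is genuinely different from the paper's and, as written, has a gap at its crucial step. The paper rescales each hypercube $[-m,m]^p$ into $(-1,1)^p$ by a semialgebraic diffeomorphism so that the restriction of $f$ becomes a definable function in the o-minimal structure $\R_{\rm an}$, and then quotes Lemma \ref{semiom} (definable choice plus piecewise continuity of definable maps) to conclude that $f({\rm Max}(f))$ is finite on each box; no stratification and no critical locus appear. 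You instead pass to a compatible stratification, reduce to the critical sets $C_i$ of $f|_{S_i}$, and count connected components of $C_i\cap\cl(V)$.

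The gap is in that count: you only establish that $C_i$ is subanalytic \emph{as a subset of the stratum} $S_i$, and $S_i$ need not be closed in $M$. The finiteness statement you invoke --- a relatively compact subanalytic set has finitely many connected components --- requires subanalyticity in the ambient manifold $M$ (equivalently, relative compactness inside the space in which the set is known to be subanalytic). A set such as $\{1/n:\ n\geq1\}\subset S_i:=(0,1)\subset\R$ is subanalytic in $S_i$, relatively compact in $\R$, and has infinitely many connected components; nothing in your argument rules out $C_i$ accumulating in this way at the frontier of $S_i$, and ruling out exactly this kind of accumulation is the real content of the lemma. To close the gap you must either prove that each $C_i$ is subanalytic in $M$ (which requires the subanalyticity of the tangent map of a subanalytic submanifold, a nontrivial input), or choose the stratification more carefully so that $f|_{S_i}$ is either constant or a submersion on each stratum (a constant-rank stratification of the subanalytic map $f$), in which case ${\rm Max}(f)$ is contained in the union of the constant strata and the finiteness of $f({\rm Max}(f)\cap V)$ is immediate. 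The remaining ingredients of your argument --- that a local maximum of $f$ on $X$ lying in $S_i$ is a critical point of $f|_{S_i}$, and that an analytic function is constant on the connected components of its critical locus via piecewise analytic arcs --- are correct.
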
 

If ${\mathfrak F}$ is either the weak category of semianalytic or $C$-semianalytic sets we have the following.

\begin{cor}\label{semicsemi}
Let ${\mathfrak F}$ be either the weak category of semianalytic or $C$-semianalytic sets and let $f:X\to\R$ be the restriction to $X\in{\mathfrak F}(M)$ of an analytic function on $M$. Then the family $\{{\rm Max}_\lambda(f)\}_{\lambda\in\R}$ is locally finite in $M$ and ${\rm Max}(f)$ and each set ${\rm Max}_\lambda(f)$ belong to ${\mathfrak F}(M)$.
\end{cor}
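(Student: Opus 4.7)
The plan is to deduce Corollary \ref{semicsemi} by combining Theorems \ref{main0} and \ref{main1}: first use the subanalytic theorem to get the local finiteness of the level sets $\{{\rm Max}_\lambda(f)\}_\lambda$ as an abstract topological fact in $M$, and then promote ${\rm Max}(f)$ from subanalytic to semianalytic or $C$-semianalytic via the equivalence in Theorem \ref{main0}. The reason this promotion is available is precisely that $f$ is the restriction of a globally analytic function $F:M\to\R$, which produces enough ${\mathfrak F}$-sets to verify the hypotheses of Theorem \ref{main0}.

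First I would observe that $f=F|_X$ is automatically continuous, that its graph equals $(X\times\R)\cap\{(x,t):F(x)=t\}$, which is the intersection of an ${\mathfrak F}$-set with a $C$-analytic subset of $M\times\R$, so $f$ is in particular a subanalytic map. Moreover $f$ is $M$-compact: for any compact $K\subset M$ one has $f(K\cap X)\subset F(K)$, and $F(K)$ is compact because $F$ is continuous on $M$. Hence Theorem \ref{main1} applies directly and yields that the family $\{{\rm Max}_\lambda(f)\}_{\lambda\in\R}$ is locally finite in $M$ and that each ${\rm Max}_\lambda(f)$ and ${\rm Max}(f)$ are subanalytic subsets of $M$.

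Next I would upgrade the category. Since $F$ is analytic on $M$, for every interval $J\subset\R$ the preimage $F^{-1}(J)$ is a finite Boolean combination of sets of the form $\{F=a\}$, $\{F>a\}$, $\{F<a\}$, and is therefore a basic $C$-semianalytic subset of $M$; in particular it belongs both to the category of semianalytic sets and to that of $C$-semianalytic sets. Consequently $f^{-1}(J)=X\cap F^{-1}(J)\in{\mathfrak F}(M)$ for every interval $J\subset\R$. This verifies the hypotheses of Theorem \ref{main0} for $f$. Since we already know from the previous paragraph that condition (iii) of Theorem \ref{main0} holds, the equivalence gives condition (i), namely ${\rm Max}(f)\in{\mathfrak F}(M)$.

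Finally I would note that the membership ${\rm Max}_\lambda(f)\in{\mathfrak F}(M)$ is immediate from the alternative description ${\rm Max}_\lambda(f)=\{f=\lambda\}\setminus\cl(\{f>\lambda\})$: the sets $\{f=\lambda\}=X\cap F^{-1}(\lambda)$ and $\{f>\lambda\}=X\cap F^{-1}((\lambda,+\infty))$ lie in ${\mathfrak F}(M)$ by the previous step, and ${\mathfrak F}$ is closed under closure and difference by the axioms of a weak category. There is really no conceptual obstacle in this proof; the only thing to be careful about is to check that preimages of all types of intervals (open, closed, half-open, unbounded) are in ${\mathfrak F}(M)$, which is exactly where the global analyticity of $F$ on $M$ is used, rather than just on $X$.
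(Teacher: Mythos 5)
Your proposal is correct and follows essentially the same route as the paper: both arguments rest on the observations that $f$ is $M$-compact and subanalytic, that preimages of intervals under $f$ lie in ${\mathfrak F}(M)$ (whence each ${\rm Max}_\lambda(f)=\{f=\lambda\}\setminus\cl(\{f>\lambda\})\in{\mathfrak F}(M)$), and on Theorem \ref{main1} for the local finiteness of $\{{\rm Max}_\lambda(f)\}_{\lambda\in\R}$. The only cosmetic difference is that you route the final membership ${\rm Max}(f)\in{\mathfrak F}(M)$ through the equivalence (iii)$\Rightarrow$(i) of Theorem \ref{main0}, whereas the paper concludes directly from the weak-category axiom that a locally finite union of ${\mathfrak F}$-sets is an ${\mathfrak F}$-set (which is in any case the content of that implication); also, $F^{-1}(J)$ is in general a finite union of basic $C$-semianalytic sets rather than a single basic one, a harmless overstatement.
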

\begin{proof}
As $f$ is the restriction to $X$ of an analytic function on $M$, it is an $M$-compact subanalytic function and the inverse images under $f$ of intervals of $\R$ belong to ${\mathfrak F}(M)$. In particular, the sets ${\rm Max}_\lambda(f)=\{f=\lambda\}\setminus\cl(\{f>\lambda\})\in{\mathfrak F}(M)$. By Theorem \ref{main1} the family $\{{\rm Max}_\lambda(f)\}_{\lambda\in\R}$ is locally finite in $M$ and ${\rm Max}(f)=\bigsqcup_{\lambda\in\R}{\rm Max}_\lambda(f)\in{\mathfrak F}(M)$, as required. 
\end{proof}

In order to prove the sharpness of our results we show the following.

\begin{prop}\label{sharp}
Let ${\mathfrak F}$ be a weak category that contains algebraic intersections and let $X\in{\mathfrak F}(M)$ be not closed in $M$. Then there exists a continuous ${\mathfrak F}$-function $f:X\to\R$ such that the inverse images under $f$ of intervals of $\R$ are subanalytic subsets of $M$ but the families $\{{\rm Max}_\lambda(f)\}_{\lambda\in\R}$, $\{{\rm Min}_\lambda(f)={\rm Max}_{-\lambda}(-f)\}_{\lambda\in\R}$ and $\{{\rm Extr}_\lambda(f)={\rm Max}_\lambda(f)\cup{\rm Max}_{-\lambda}(-f)\}_{\lambda\in\R}$ are not locally finite in $M$. Consequently, the sets ${\rm Max}_\lambda(f),{\rm Min}_\lambda(f),{\rm Extr}_\lambda(f)\in{\mathfrak F}(M)$ for each $\lambda\in\R$ whereas the sets ${\rm Max}(f)$, ${\rm Min}(f)$ and ${\rm Extr}(f)$ do not belong to ${\mathfrak F}(M)$.
\end{prop}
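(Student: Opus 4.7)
\emph{Plan.} I would exploit that $X$ is not closed in $M$: pick a point $p\in\cl(X)\setminus X$. Since $\mathfrak{F}$ contains algebraic intersections, by Whitney's immersion theorem I may localize around $p$ in a semialgebraic chart of $\R^m$ with $p$ at the origin, gaining access to the full semialgebraic toolkit when designing $f$. The subanalytic curve selection lemma produces a real analytic arc $\gamma:[0,1)\to M$ with $\gamma(0)=p$ and $\gamma((0,1))\subset X$, which will serve as the backbone for the construction.

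The heart of the argument is to build explicitly a continuous $\mathfrak{F}$-function $f:X\to\R$ exhibiting a sequence $\{q_n\}\subset X$ of distinct local maxima with $q_n\to p$ and pairwise distinct values $\lambda_n:=f(q_n)$. Concretely, I would place along $\gamma$ a family of semialgebraic ``peaks'' --- each a scaled and translated copy of a fixed semialgebraic template bump --- supported in disjoint subanalytic pieces $U_n$ of a tubular neighborhood of $\gamma$ which accumulate at $p$. Outside the supports of these peaks, $f$ is defined by a continuous $\mathfrak{F}$-extension (for instance, the zero extension, the peak template being chosen to vanish on the boundary of its support). The entire family of peaks is arranged so as to be described by a single subanalytic parametrization, so that $\Gamma_f$ is an $\mathfrak{F}$-set of $M\times\R$ and inverse images under $f$ of intervals of $\R$ are subanalytic subsets of $M$.

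Once $f$ is constructed, the verifications are direct. Each $\text{Max}_\lambda(f)=\{f=\lambda\}\setminus\cl(\{f>\lambda\})$ reduces, for $\lambda=\lambda_n$, to the singleton $\{q_n\}$ (and is empty otherwise), so it belongs to $\mathfrak{F}(M)$ by the axioms of a weak category (one only uses differences, closures, and the existence of singletons as algebraic intersections). The family $\{\text{Max}_\lambda(f)\}_\lambda$ fails to be locally finite at $p$, since every neighborhood of $p$ in $M$ contains all but finitely many $q_n$, hence meets infinitely many of the sets $\text{Max}_{\lambda_n}(f)=\{q_n\}$. Consequently $\text{Max}(f)=\{q_n:n\geq 1\}$ is a discrete sequence in $X$ accumulating at $p\in M$, and therefore cannot belong to $\mathfrak{F}(M)$: any $\mathfrak{F}$-subset of $M$ is in particular subanalytic in $M$, and subanalytic sets have locally finite families of connected components. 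The analogous conclusions for $\text{Min}(f)$ and $\text{Extr}(f)$ then follow by symmetry applied to $-f$.

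\emph{Main obstacle.} The delicate step is arranging $\Gamma_f$ to remain an $\mathfrak{F}$-set even though infinitely many peak features accumulate at a single limit point $(p,\lambda^*)\in M\times\R$. A naive disjoint union of semianalytic bumps produces infinitely many connected components of $\Gamma_f$ in every neighborhood of $(p,\lambda^*)$, which already violates subanalyticity. To overcome this one must realize all the peaks at once as a single subanalytic object --- typically as the image under a semianalytic family of affine rescalings of a fixed analytic template along $\gamma$ --- so that $\Gamma_f$, in any neighborhood of $(p,\lambda^*)$, admits a description as the projection of a relatively compact semianalytic set, and is thus subanalytic at the limit point while still producing the required infinite family of distinct-height local maxima.
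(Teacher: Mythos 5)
Your overall strategy --- pick $p\in\cl(X)\setminus X$ and build a continuous ${\mathfrak F}$-function with infinitely many local maxima at pairwise distinct levels accumulating at $p$ --- is the same as the paper's, but two concrete design choices in your construction are fatal, and they are precisely the traps the paper's example is engineered to avoid. First, you envisage the peak heights $\lambda_n$ accumulating at a finite value $\lambda^*$ and propose to salvage subanalyticity of $\Gamma_f$ near $(p,\lambda^*)$ by a simultaneous parametrization of all the peaks. No parametrization can work, because the desired object does not exist: if $\Gamma_f$ is subanalytic, then $\rho:=\pi_2|_{\cl(\Gamma_f)}$ is a continuous subanalytic function on a closed subanalytic subset of $M\times\R$, so by Lemma \ref{main2} the family $\{{\rm Max}_\lambda(\rho)\}_{\lambda\in\R}$ is locally finite in $M\times\R$; but each local maximum $q_n$ of $f$ gives $(q_n,\lambda_n)\in{\rm Max}_{\lambda_n}(\rho)$ (forward inclusion in Lemma \ref{reduction}(iv)), and these points converge to $(p,\lambda^*)\in M\times\R$ with pairwise distinct $\lambda_n$ --- a contradiction. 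Hence the heights are forced to escape to infinity; equivalently, $f$ must fail to be $M$-compact at $p$ (compare Theorem \ref{main1} and Corollary \ref{main3}). Second, even granting $\lambda_n\to+\infty$, your ``zero extension between peaks'' breaks the remaining requirements: for an intermediate interval such as $I=(1,2)$ the preimage $f^{-1}(I)$ meets every peak of height at least $2$, hence consists of infinitely many pieces accumulating at $p$ and cannot be subanalytic in $M$; likewise $\Gamma_f\cap(M\times I)$ accumulates at points of $\{p\}\times\cl(I)$, so $\Gamma_f$ itself is not subanalytic.

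The paper sidesteps both problems with a monotone staircase: on concentric shells $Z_m$ around $p$ it alternates constant plateaus of value $k-1$ with ramps climbing to $k$, so $f\to+\infty$ at $p$, every bounded interval has preimage contained in finitely many shells (hence semialgebraic, hence in ${\mathfrak F}(M)$ since ${\mathfrak F}$ contains semialgebraic intersections), and the graph is a locally finite union of semialgebraic pieces in $M\times\R$ because it escapes to infinity over $p$. A side effect of using plateaus rather than point peaks is that ${\rm Max}_k(f)$ is a shell intersected with $X$ rather than a singleton, so the paper cannot conclude ${\rm Max}(f)\notin{\mathfrak F}(M)$ as cheaply as you do; it needs the countability-of-values argument showing each ${\rm Max}_\lambda(f)$ is a union of connected components of ${\rm Max}(f)$. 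If you redesign your construction so that $f$ increases to $+\infty$ toward $p$ and never returns to a common baseline, your final verifications (non-local-finiteness at $p$, and non-membership via local finiteness of the connected components of a subanalytic set) do go through.
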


\subsection{Semialgebraic case and o-minimal structures}
A first case one analyzes concerns the semialgebraic setting and more general o-minimal structures. A \em semialgebraic set \em $S\subset\R^n$ is a subset of $\R^n$ that can be described as a finite Boolean combination of polynomial equalities and inequalities whereas a \em semialgebraic function \em $f:S\to\R$ is a function whose graph is a semialgebraic subset of $\R^{n+1}$. Tarski-Seidenberg's theorem states that the projection of a semialgebraic set is again semialgebraic. 

We recall next the definition of an o-minimal structure. 

\begin{defn}\label{ominstr}
An \em o-minimal structure \em on the field $\R$ of real numbers is a collection ${\mathfrak S}:=\{{\mathfrak S}_n\}_{n\in\N}$ of
families ${\mathfrak S}_n$ of subsets of $\R^n$ satisfying:
\begin{itemize}
\item[(1)] ${\mathfrak S}_n$ contains all the algebraic subsets of $\R^n$.
\item[(2)] ${\mathfrak S}_n$ is a Boolean algebra.
\item[(3)] If $A\in{\mathfrak S}_m$ and $B\in{\mathfrak S}_n$, then $A\times B\in{\mathfrak S}_{m+n}$.
\item[(4)] If $\pi:\R^n\times\R\to\R^n$ is the natural projection and $A\in{\mathfrak S}_{n+1}$, then $\pi(A)\in{\mathfrak S}_n$.
\item[(5)] ${\mathfrak S}_1$ consists precisely of all the finite unions of points and intervals of any type.
\end{itemize}
\end{defn}
The elements of ${\mathfrak S}_n$ are called \em definable subsets of $\R^n$ \em and a map is called \em definable \em if its graph is a definable set. The concept of o-minimal structure arose within the framework of Model Theory. Briefly, we fix a language $L$ of symbols that represent functions, relations and constants of $\R$, and that contain the symbols for the ordered field structure of $\R$. The \em atomic formulas \em of $L$ are those of the form $f_1(\x_1,\ldots,\x_n){\Rr}f_2(\x_1,\ldots,\x_n)$ where $f_1$ and $f_2$ are compositions of the functions in $L$ and $\Rr$ is a relation in $L$. A \em first order formula \em is written with a finite number of conjunctions, disjunctions, and universal or existencial quantifiers on some of the variables of the atomic formulas. An $L$-structure on $\R$ is an interpretation of the symbols in $L$ and the subsets described by the first order formula are called \emph{definable}. Then, we say that such an $L$-structure is o-minimal if every definable (possibly with parameters) subset of $\R$ is a finite union of intervals and points. 

As a consequence of Tarski-Seidenberg's theorem, semialgebraic sets constitute an o-minimal structure and in fact it is contained in each o-minimal structure on $\R$. The categories of semialgebraic sets and definable sets in o-minimal structures enjoy similar properties. For instance, each definable map is piecewise continuous \cite[Thm.3.2.11]{vdD1} and we have definable choice \cite[Prop.6.1.2]{vdD1}.

\begin{lem}\label{semiom}
Let $\mathfrak{S}$ be an o-minimal structure. Let $S\subset\R^n$ be a definable set and $f:S\to\R$ a definable function. Then the set ${\rm Max}(f)$ of local maxima of $f$ is a definable subset of $\R^n$. In addition, $f({\rm Max}(f))$ is a finite set.
\end{lem}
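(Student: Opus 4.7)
The plan is to treat the two claims separately: definability of ${\rm Max}(f)$ is a formal exercise in writing down a first-order formula, while finiteness of $f({\rm Max}(f))$ requires cell decomposition together with a symmetry property of local maxima. First I would observe that the condition ``$x$ is a local maximum of $f$'' is expressed by the first-order formula
\[
x\in S \wedge \exists\,\veps>0\,\forall y\,\bigl(y\in S \wedge \|y-x\|<\veps \Rightarrow f(y)\leq f(x)\bigr),
\]
which involves only the ordered-field operations on $\R$, the definable predicate ``$y\in S$'' and the definable graph of $f$. Hence ${\rm Max}(f)\in{\mathfrak S}_n$.

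For the finiteness assertion I would invoke the $C^0$ cell decomposition theorem for o-minimal structures (see \cite[Ch.~3]{vdD1}) to partition $\R^n$ into finitely many definable cells $C_1,\ldots,C_N$ compatible with both $S$ and ${\rm Max}(f)$, and on each of which $f$ is continuous. By compatibility each cell is either contained in ${\rm Max}(f)$ or disjoint from it, so the image $f({\rm Max}(f))$ equals the finite union $\bigcup_{C_i\subset{\rm Max}(f)} f(C_i)$; it therefore suffices to show that $f|_C$ is constant on every cell $C\subset{\rm Max}(f)$, which will make $f({\rm Max}(f))$ a finite set of at most $N$ values.

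The crucial observation, which does most of the work, is the following symmetry of local maxima: if $x,y\in{\rm Max}(f)$ lie sufficiently close together that each belongs to the neighborhood witnessing the other's maximality, then $f(x)\geq f(y)$ and $f(y)\geq f(x)$, whence $f(x)=f(y)$. Applied inside a cell $C\subset{\rm Max}(f)$, which is definably path-connected and on which $f$ is continuous, this symmetry combined with a one-dimensional argument along any definable path joining two points of $C$ (a continuous function on an interval for which every interior point is a local maximum must be constant, by a standard supremum argument on the sublevel set $\{t\colon h(t)\leq h(a)\}$) forces $f|_C$ to be constant. The only delicate point in the plan is invoking a version of cell decomposition strong enough to ensure simultaneously compatibility with ${\rm Max}(f)$ and continuity of $f$ on each cell; once that is in hand the remaining arguments are routine, and I foresee no serious obstacle.
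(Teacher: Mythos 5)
Your proposal is correct, and the definability part coincides with the paper's (the same first-order formula). For the finiteness of $f({\rm Max}(f))$, however, you take a genuinely different route. The paper argues by contradiction: if $f({\rm Max}(f))$ were infinite it would contain an interval $I$ (axiom (5)), definable choice produces a section $\alpha:I\to{\rm Max}(f)$ with $f\circ\alpha=\id_I$, piecewise continuity lets one shrink $I$ so that $\alpha$ is continuous, and then for $t_0\in I$ the neighborhood witnessing that $\alpha(t_0)$ is a local maximum pulls back to an open set of parameters $t_1>t_0$ with $t_1=f(\alpha(t_1))\leq t_0$, a contradiction. You instead use $C^0$ cell decomposition compatible with $S$ and ${\rm Max}(f)$ and show $f$ is constant on each cell contained in ${\rm Max}(f)$, which yields the sharper conclusion that $f({\rm Max}(f))$ has at most as many elements as there are cells. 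Your argument is sound: the real work is done not by the ``symmetry'' observation (which by itself is insufficient, since the witnessing neighborhoods of two nearby local maxima need not contain each other --- there is no uniform radius --- so $f|_C$ is not obviously locally constant) but by the parenthetical one-dimensional lemma: pulling $f$ back along a continuous path in $C$ gives a continuous $h$ on an interval every point of which is a local maximum, and the supremum argument on $\{t:h(t)\leq h(a)\}$ then forces $h$ to be constant; this, together with path-connectedness of cells, closes the proof. The trade-off is that the paper's proof needs definable choice and generic continuity of definable maps but no decomposition, while yours needs the full $C^0$ cell decomposition theorem and path-connectedness of cells but avoids definable choice and gives an explicit bound on $\#f({\rm Max}(f))$; both rest on standard o-minimality machinery and both are valid.
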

\begin{proof}
The set ${\rm Max}(f)$ can be described as the set of points $x\in S$ for which there exists $\veps>0$ satisfying $f(x)\geq f(y)$ for each $y\in S\cap\{\|\y-x\|<\veps\}$. Thus, ${\rm Max}(f)$ is clearly a definable subset of $\R^n$. 

Suppose next that $f({\rm Max}(f))$ is an infinite set. By property (5) in Definition \ref{ominstr} there exists an infinite open subinterval $I\subset f({\rm Max}(f))$. By definable choice there exists a definable map $\alpha:I\rightarrow{\rm Max}(f)$ such that $f(\alpha(t))=t$ for each $t\in I$ and by \cite[Thm.3.2.11]{vdD1} we may assume in addition (after shrinking $I$) that $\alpha$ is continuous. Fix $t_0\in I$. As $\alpha(t_0)$ is a local maximum, there exists an open neighborhood $U$ of $\alpha(t_0)$ such that $f(x)\leq f(\alpha(t_0))=t_0$ for each $x\in U$. As $\alpha$ is continuous, $\alpha^{-1}(U)$ is open and contains $t_0$, hence there exists $t_1\in\alpha^{-1}(U)$ such that $t_0<t_1$. But $\alpha(t_1)\in U$, so $t_1=f(\alpha(t_1))\leq t_0$, which is a contradiction. Thus, $f({\rm Max}(f))$ is a finite set, as required.
\end{proof}

By Lemma \ref{semiom} the family $\{{\rm Max}_\lambda(f)\}_{\lambda\in\R}$ is finite for each definable function $f$. Analogously, the sets of local minima, strict local maxima, strict local minima, global maxima, global minima, strict global maxima, strict global minima, local extrema, strict local extrema, global extrema and strict global extrema of $f$ are definable subsets of $\R^n$ and their images under $f$ are finite sets. 

In the subanalytic setting it is also possible to find a category that constitutes an o-minimal structure. A \em restricted analytic function in $n$-variables \em is a function $f:[-1,1]^n\to\R$ that admits an analytic continuation to an open neighborhood of $[-1,1]^n$ of $\R^n$. A \em global subanalytic subset \em of $\R^n$ is a subset $X\subset\R^n$ such that there exists a semialgebraic homeomorphism $g:\R^n\to(-1,1)^n$ satisfying that $g(X)$ is a subanalytic subset of $\R^n$. The collection of global subanalytic sets is precisely the collection of definable sets in the o-minimal structure $\R_{\rm an}$ generated by the set $\widetilde{\mathcal F}_{\rm an}$ of restricted analytic functions \cite{dd,w}. Thus, if $X\subset\R^n$ is a global subanalytic set and $f:X\to\R$ is a definable function of the o-minimal structure $\R_{\rm an}$, the sets of local minima, strict local maxima, strict local minima, global maxima, global minima, strict global maxima, strict global minima, local extrema, strict local extrema, global extrema and strict global extrema of $f$ are global subanalytic subsets of $\R^n$ and their images under $f$ are finite sets.

However, {\em we point out that further work is required for general weak categories like those of subanalytic, semianalytic or $C$-semianalytic sets}.

\subsection{Structure of the article}
The article in organized as follows. In Section \ref{s2} we present some preliminaries concerning local extrema of real functions, basic properties about subanalytic, semianalytic and $C$-semianalytic sets and functions and some enlightening examples. In Section \ref{s3} we prove the main results of this article and we analyze the properties of the set of openness points of a subanalytic function. In Section \ref{s4} we study local extrema of real analytic functions on real manifolds that are locally normal crossings (Theorem \ref{easydesc}) and we take advantage of local uniformization of a continuous subanalytic function $f:X\to\R$ on a closed subanalytic subset $X$ of a real analytic manifold $M$ to provide an alternative description as subanalytic subsets of $M$ of the sets ${\rm Max}_\lambda(f)$ for $\lambda\in\R$, that does not involve closures (Corollary \ref{altdesc}). 

\section{Basic facts, tools and examples}\label{s2}

In this section we present some preliminaries we need along the article in orden to lighten the proofs of the involved results. We begin analyzing some basic properties of local extrema of real functions. 

\subsection{Local extrema of real functions}
Let us see next some basic results concerning local extrema of real functions. We use here the letters $X,Y,Z$ to denote topological spaces.

\begin{lem}[(Alternative description of local maxima)]\label{altdes}
Let $f:X\to\R$ be a function and let $\lambda\in\R$. Then ${\rm Max}_\lambda(f)=\{f-\lambda=0\}\setminus\cl(\{f-\lambda>0\})$.
\end{lem}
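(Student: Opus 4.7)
The plan is to prove the two set inclusions directly by unwinding the definition of local maximum, together with the characterization of closure via neighborhoods.

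First I would handle the inclusion ${\rm Max}_\lambda(f)\subset\{f-\lambda=0\}\setminus\cl(\{f-\lambda>0\})$. Pick $x_0\in{\rm Max}_\lambda(f)$. By definition of ${\rm Max}_\lambda(f)={\rm Max}(f)\cap\{f=\lambda\}$ we have $f(x_0)=\lambda$, so $x_0\in\{f-\lambda=0\}$, and there exists an open neighborhood $W\subset X$ of $x_0$ with $f(x)\le f(x_0)=\lambda$ for every $x\in W$. Then $W\cap\{f-\lambda>0\}=\varnothing$, so $x_0$ is not in the closure of $\{f-\lambda>0\}$.

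Next I would handle the reverse inclusion. Let $x_0\in\{f-\lambda=0\}\setminus\cl(\{f-\lambda>0\})$. From $x_0\notin\cl(\{f-\lambda>0\})$ there is an open neighborhood $W$ of $x_0$ with $W\cap\{f-\lambda>0\}=\varnothing$, so $f(x)\le\lambda=f(x_0)$ for all $x\in W$, which says $x_0\in{\rm Max}(f)$; combined with $f(x_0)=\lambda$ this gives $x_0\in{\rm Max}_\lambda(f)$.

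Both directions are essentially definition-chasing, so there is no real obstacle here; the statement is really just a restatement of the definition of a local maximum in the language of sublevel/superlevel sets, made useful for the rest of the paper because the right-hand side is manifestly in ${\mathfrak F}(M)$ whenever $\{f=\lambda\}$ and $\{f>\lambda\}$ are.
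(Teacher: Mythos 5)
Your proof is correct and follows the same route as the paper: the forward inclusion is argued identically (a neighborhood witnessing the local maximum misses $\{f-\lambda>0\}$), and the reverse inclusion, which the paper dismisses as clear, is the same definition-unwinding you spell out. Nothing further is needed.
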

\begin{proof}
Pick $x_0\in{\rm Max}_\lambda(f)$. Then there exists an open neighborhood $V^{x_0}\subset X$ of $x_0$ such that $f(x)\leq f(x_0)=\lambda$ for each $x\in V^{x_0}$, that is, $V^{x_0}\subset\{f-\lambda\leq0\}$, so $V^{x_0}\cap\{f-\lambda>0\}=\varnothing$. Thus, $x_0\in\{f-\lambda=0\}\setminus\cl(\{f-\lambda>0\})$. The converse inclusion is clear.
\end{proof}

\begin{remarks}\label{imext}\em
Let $f:X\to\R$ be a real function. 

(i) Let $\varphi:Y\to X$ be continuous map. Let $x_0\in X$ be a local maximum of $f$ and let $y_0\in Y$ be such that $\varphi(y_0)=x_0$. As $\varphi$ is continuous, $y_0$ is a local maximum of $f\circ\varphi$.

(ii) If $x_0\in{\rm Max}(f)\cap{\rm Min}(f)$, then there exists an open neighborhood $V^{x_0}\subset X$ such that $f(y)\leq f(x_0)\leq f(y)$ for each $y\in V^{x_0}$, that is, $f|_{V^y}$ is constant.
\end{remarks}

\begin{lem}\label{equiv}
Let $f:X\to\R$ be a function and let $x_0\in X$. Let $\pi:Y\to X$ be a surjective continuous closed map. Then $x_0$ is a local maximum of $f$ if and only if each point $y\in\pi^{-1}(x_0)$ is a local maximum of $f\circ\pi$. 
\end{lem}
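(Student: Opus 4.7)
The forward direction is immediate: if $x_0$ is a local maximum of $f$, then for any $y\in\pi^{-1}(x_0)$, since $\pi$ is continuous and sends $y$ to $x_0$, Remark \ref{imext}(i) yields that $y$ is a local maximum of $f\circ\pi$. So the content of the lemma lies in the converse, and this is where I would focus the work.

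For the converse, assume each $y\in\pi^{-1}(x_0)$ is a local maximum of $f\circ\pi$ and pick, for every such $y$, an open neighborhood $V^y\subset Y$ satisfying $(f\circ\pi)(z)\leq(f\circ\pi)(y)=f(x_0)$ for all $z\in V^y$. Set
$$
V:=\bigcup_{y\in\pi^{-1}(x_0)}V^y,
$$
which is open in $Y$ and contains $\pi^{-1}(x_0)$. The plan is then to manufacture from $V$ an open neighborhood of $x_0$ in $X$ on which $f\leq f(x_0)$.

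This is where the closedness hypothesis on $\pi$ enters, and it is the key step. The set $Y\setminus V$ is closed in $Y$, so its image $\pi(Y\setminus V)$ is closed in $X$. Since $\pi^{-1}(x_0)\subset V$, the point $x_0$ does not belong to $\pi(Y\setminus V)$, hence
$$
U:=X\setminus\pi(Y\setminus V)
$$
is an open neighborhood of $x_0$ in $X$. I claim that $\pi^{-1}(U)\subset V$: indeed, if $y\in\pi^{-1}(U)$ were to lie in $Y\setminus V$, then $\pi(y)$ would lie in $\pi(Y\setminus V)$, contradicting $\pi(y)\in U$.

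To conclude, take any $x\in U$. By surjectivity of $\pi$ there exists $y\in Y$ with $\pi(y)=x$, and by the previous paragraph $y\in V$, so $y\in V^{y'}$ for some $y'\in\pi^{-1}(x_0)$. Consequently $f(x)=f(\pi(y))\leq f(x_0)$, and $x_0$ is a local maximum of $f$, as required. The main obstacle, as indicated, is the transition from the open neighborhood $V$ of the whole fiber $\pi^{-1}(x_0)$ to an open neighborhood of $x_0$ downstairs; the assumption that $\pi$ is closed (and not merely continuous) is used precisely at that step, while surjectivity is used only at the very end to cover $U$ by images from $V$.
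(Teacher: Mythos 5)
Your proof is correct and follows essentially the same route as the paper: the forward direction via continuity of $\pi$, and for the converse the same key step of taking $U:=X\setminus\pi(Y\setminus V)$ using closedness of $\pi$ to descend from a neighborhood of the fiber to a neighborhood of $x_0$. The only cosmetic difference is that the paper normalizes $f(x_0)=0$ first, which changes nothing of substance.
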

\begin{proof}
The only if part follows from the continuity of $f$ (Remark \ref{imext}). To prove the if part we proceed as follows. Assume $f(x_0)=0$, so $(f\circ\pi)(y)=0$ for each $y\in\pi^{-1}(x_0)$. Thus, for each $y\in\pi^{-1}(x_0)$ there exists an open neighborhood $V^y\subset Y$ such that $(f\circ\pi)(z)\leq 0$ for each $z\in V^y$. Consequently, $(f\circ\pi)(z)\leq 0$ for each $z\in V:=\bigcup_{y\in\pi^{-1}(x_0)}V^y$. As $V$ is an open neighborhood of $\pi^{-1}(x_0)$ in $Y$, the difference $T:=Y\setminus V$ is a closed subset of $Y$. As $\pi$ is closed, $\pi(T)$ is a closed subset of $X$ that does not contain $x_0$. Thus, $U:=X\setminus\pi(T)$ is an open neighborhood of $x_0$ such that $\pi^{-1}(U)=Y\setminus\pi^{-1}(\pi(T))\subset Y\setminus T=V$, so $f(U)=(f\circ\pi)(\pi^{-1}(U))\subset (f\circ\pi)(V)\subset(-\infty,0]$, whereas $f(x_0)=0$. Consequently, $x_0$ is a local maximum of $f$, as required.
\end{proof}

\begin{lem}[(Reduction to the closed case)]\label{reduction}
Let $f:X\to Z$ be a map on a subset $X$ of a locally compact Hausdorff topological space $Y$ and let $\lambda\in\R$. Let $\Gamma\subset X\times Z\subset Y\times Z$ be the graph of $f$ and let $\ol{\Gamma}$ be the closure of $\Gamma$ in $Y\times Z$. Let $\pi_1:Y\times Z\to Y$ and $\pi_2:Y\times Z\to Z$ be the projections onto the first and the second factors of $Y\times Z$ and let $\rho:=\pi_2|_{\ol{\Gamma}}:\ol{\Gamma}\to Z$ be the restriction of $\pi_2$ to $\ol{\Gamma}$. We have:
\begin{itemize}
\item[(i)] $\pi_1(\ol{\Gamma}\setminus\Gamma)=\cl(X)\setminus X$ and $\pi_1(\Gamma)=X$.
\item[(ii)] The map $\rho:=\pi_2|_{\ol{\Gamma}}:\ol{\Gamma}\to Z$ is continuous.
\item[(iii)] If $f$ is $Y$-compact, the restriction $\pi_1|_{\ol{\Gamma}}:\ol{\Gamma}\to Y$ is a proper map. 
\end{itemize}
Assume in what follows $Z=\R$. We have:
\begin{itemize}
\item[(iv)] ${\rm Max}_\lambda(f)=\pi_1({\rm Max}_\lambda(\rho))\cap X=\pi_1({\rm Max}_\lambda(\rho)\cap\Gamma)$ for each $\lambda\in\R$.
\item[(v)] If $f$ is $Y$-compact and the family $\{{\rm Max}_\lambda(\rho)\}_{\lambda\in\R}$ is locally finite in $Y\times\R$, the family $\{{\rm Max}_\lambda(f)\}_{\lambda\in\R}$ is locally finite in $Y$.
\item[(vi)] ${\rm Max}(f)=\pi_1({\rm Max}(\rho))\cap X=\pi_1({\rm Max}(\rho)\cap\Gamma)$.
\end{itemize}
\end{lem}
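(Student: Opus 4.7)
The plan is to tackle (i)--(vi) in order, each building on the previous. For (i), $\pi_1(\Gamma)=X$ is immediate from the definition of a graph. The first equality follows by first verifying $\pi_1(\ol{\Gamma})=\cl(X)$: continuity of $\pi_1$ gives $\subseteq$, and for any $y\in\cl(X)$ one lifts a net $x_\alpha\to y$ in $X$ to $(x_\alpha,f(x_\alpha))\in\Gamma$ and extracts a convergent subnet (using local compactness of $Y$ together with the continuity of $f$) to produce a point of $\ol{\Gamma}$ above $y$. Since $\pi_1(\Gamma)=X$ captures exactly the points arising from $\Gamma$, the complementary projection $\pi_1(\ol{\Gamma}\setminus\Gamma)$ equals $\cl(X)\setminus X$. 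Part (ii) is immediate: $\rho$ is the restriction of the continuous projection $\pi_2$. For (iii), fix compact $K\subset Y$; by local compactness choose a relatively compact open $U\supset K$, so $\cl(f(U\cap X))$ is compact in $Z$ by $Y$-compactness of $f$. Then $(\pi_1|_{\ol{\Gamma}})^{-1}(K)=\ol{\Gamma}\cap(K\times Z)$ is closed in $Y\times Z$ and sits inside the compact set $K\times\cl(f(U\cap X))$, hence is itself compact.

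For (iv), the crucial observation (already implicit in (i)) is that for each $y\in X$ the fiber of $\pi_1|_{\ol{\Gamma}}$ above $y$ reduces to the singleton $\{(y,f(y))\}$; this gives at once $\pi_1({\rm Max}_\lambda(\rho))\cap X=\pi_1({\rm Max}_\lambda(\rho)\cap\Gamma)$. To match this with ${\rm Max}_\lambda(f)$ I verify both inclusions: from a local-max neighborhood $W=\widetilde W\cap X$ of $y\in X$ with $f\leq\lambda$ on $W$, a standard net-closure argument shows $\rho\leq\lambda$ on the neighborhood $(\widetilde W\times\R)\cap\ol{\Gamma}$ of $(y,f(y))$; conversely, a local-max neighborhood of $(y,f(y))$ in $\ol{\Gamma}$, tested on the subset $\Gamma$, projects via $\pi_1$ to a neighborhood of $y$ in $X$ on which $f\leq\lambda$. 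Part (v) then combines (iii) with (iv): for compact $K\subset Y$ the compact set $(\pi_1|_{\ol{\Gamma}})^{-1}(K)$ meets only finitely many ${\rm Max}_\lambda(\rho)$ by local finiteness in $Y\times\R$, and applying $\pi_1$ yields finitely many sets ${\rm Max}_\lambda(f)=\pi_1({\rm Max}_\lambda(\rho)\cap\Gamma)$ meeting $K$. Finally, (vi) is immediate from (iv) by taking the disjoint union over $\lambda\in\R$, since ${\rm Max}(f)=\bigsqcup_\lambda{\rm Max}_\lambda(f)$ and similarly for $\rho$.

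The main obstacle is the bookkeeping behind (i) and (iv), where one must carefully translate between open neighborhoods in $\ol{\Gamma}\subset Y\times Z$ and open neighborhoods in $X\subset Y$ under $\pi_1$; the crux is that the continuity of $f$ together with local compactness of $Y$ prevents $\ol{\Gamma}$ from acquiring spurious extra fibers over points of $X$, so that local maxima of $\rho$ on $\ol{\Gamma}$ and of $f$ on $X$ correspond cleanly via $\pi_1$ restricted to $\Gamma$.
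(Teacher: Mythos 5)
Your overall route is the paper's: (ii) is immediate, (iii) traps $\pi_1^{-1}(K)\cap\ol{\Gamma}$ inside a product of compacta exactly as the paper does, (iv) is proved by checking the two inclusions directly (your forward argument is the paper's limit argument, and your compact-neighborhood derivation of (v) is if anything cleaner than the paper's construction of the saturated open set $Y\setminus\pi_1(\ol{\Gamma}\setminus V^{\xi_0})$), and (vi) is the union over $\lambda$. The genuine problem is your (i), which the paper dismisses as straightforward but which carries precisely the content you invoke later, namely that the fiber of $\ol{\Gamma}$ over each $x\in X$ is the singleton $\{(x,f(x))\}$. Your proof of $\pi_1(\ol{\Gamma})\supseteq\cl(X)$ does not work: lifting a net $x_\alpha\to y$ to $(x_\alpha,f(x_\alpha))$ and extracting a convergent subnet ``using local compactness of $Y$ together with the continuity of $f$'' only controls the first coordinate; nothing forces the values $f(x_\alpha)$ into a compact subset of $Z$ when $y\in\cl(X)\setminus X$ ($Z$ is an arbitrary space in (i), and continuity of $f$ on $X$ says nothing at a point outside $X$). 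Indeed the inclusion fails without $Y$-compactness: for $X=(0,1]\subset Y=\R$ and $f(x)=1/x$ one has $\ol{\Gamma}=\Gamma$, hence $\pi_1(\ol{\Gamma}\setminus\Gamma)=\varnothing\neq\{0\}=\cl(X)\setminus X$. The hypothesis doing the work here is the $Y$-compactness of $f$, which you only bring in at (iii).

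Second, even granting $\pi_1(\ol{\Gamma})=\cl(X)$ and $\pi_1(\Gamma)=X$, the equality $\pi_1(\ol{\Gamma}\setminus\Gamma)=\cl(X)\setminus X$ does not follow by ``complementary projection'': $\pi_1$ need not be injective on $\ol{\Gamma}$, so $\pi_1(\ol{\Gamma}\setminus\Gamma)$ could still meet $X$. Ruling this out is exactly the singleton-fiber property, and it is here that continuity of $f$ (at points of $X$, together with $Z$ Hausdorff) enters: for $f:\R\to\R$ with $f(0)=1$ and $f\equiv0$ elsewhere, the point $(0,0)$ lies in $\ol{\Gamma}\setminus\Gamma$ over $0\in X$. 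So your (i) asserts rather than proves the one nontrivial fact, and since your (iv) and (v) cite the singleton fiber as ``implicit in (i)'', the gap propagates. The repair is to prove the two inclusions of (i) separately: $\pi_1(\ol{\Gamma}\setminus\Gamma)\subseteq\cl(X)\setminus X$ from continuity of $f$ on $X$, and the reverse inclusion from $Y$-compactness by the same trapping argument you use in (iii). A smaller point, which the paper's own write-up shares, is that in the reverse inclusion of (iv) a basic neighborhood $\ol{\Gamma}\cap(W\times(\lambda-\veps,\lambda+\veps))$ only controls $f$ on $\{x\in X\cap W:\ |f(x)-\lambda|<\veps\}$, and again it is continuity of $f$ that upgrades this to a full neighborhood of $x_0$ in $X$; this should be said explicitly since continuity is not among the stated hypotheses of the lemma.
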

\begin{proof}
Statements (i) and (ii) are straightforwardly proved.

(iii) Let $K\subset Y$ be a compact set and let us check: \em $\pi_1^{-1}(K)\cap\ol{\Gamma}$ is a compact subset of $\ol{\Gamma}$\em. This will prove that $\pi_1|_{\ol{\Gamma}}$ is proper.

Let $K'$ be another compact subset of $Y$ that contains $K$ in its interior. Then the closure $C:=\cl(f(K'\cap X))$ is a compact subset of $Z$. Pick a point $(y,z)\in\pi_1^{-1}(K)\cap\ol{\Gamma}$ and let $V\times W\subset Y\times Z$ be an open neighborhood of $(y,z)$. Then $y=\pi_1(y,z)\in K$ and we may assume that $V\subset K'$. As $(y,z)\in\ol{\Gamma}$, there exists $x\in X$ such that $(x,f(x))\in\Gamma\cap(V\times W)$. Observe that $(x,f(x))\in K'\times C$. Thus, each open neighborhood of $(y,z)$ meets the closed set $K'\times C$, so $(y,z)\in K'\times C$. Consequently, $\pi_1^{-1}(K)\cap\ol{\Gamma}$ is a closed subset of $Y\times Z$ contained in the compact subset $K'\times C$. Hence, $\pi_1^{-1}(K)\cap\ol{\Gamma}$ is a compact subset of $Y\times Z$.

(iv) We prove next: ${\rm Max}_\lambda(f)=\pi_1({\rm Max}_\lambda(\rho))\cap X$. 

Pick $x_0\in{\rm Max}_\lambda(f)$ and let us prove $x_0\in\pi_1({\rm Max}_\lambda(\rho))\cap X$. There exists an open neighborhood $V^{x_0}\subset Y$ of $x_0$ such that $f(x_0)\geq f(x)$ for each $x\in X\cap V^{x_0}$. We distinguish two cases:

\noindent{\sc Case 1}: If $x_0\in X\setminus\cl(\cl(X)\setminus X)$, we may assume $\cl(\cl(X)\setminus X)\cap V^{x_0}=\varnothing$. Thus, 
$$
\ol{\Gamma}\cap(V^{x_0}\times\R)=\Gamma\cap(V^{x_0}\times\R)=\{(x,f(x)):\ x\in X\cap V^{x_0}\}.
$$
If $(y,t)\in\ol{\Gamma}\cap(V^{x_0}\times\R)$, we have $\rho(y,t)=\pi_2(y,f(y))=f(y)\leq f(x_0)=\rho(x_0,f(x_0))$, so the point $(x_0,f(x_0))\in{\rm Max}_\lambda(\rho)$ and $x_0\in\pi_1({\rm Max}_\lambda(\rho))\cap X$. 

\noindent{\sc Case 2}: Assume next that $x_0\in\cl(\cl(X)\setminus X)$ and pick $(y,t)\in\ol{\Gamma}\cap(V^{x_0}\times\R)$, then $y\in\cl(X)\cap V^{x_0}$. Let $(x,f(x))\in\Gamma$ be close to $(y,t)$. Then $x\in X\cap V^{x_0}$, so $f(x)\leq f(x_0)$. Consequently, $t\leq f(x_0)$ and $\rho(y,t)=\pi_2(y,t)=t\leq f(x_0)=\rho(x_0,f(x_0))$, so $(x_0,f(x_0))\in{\rm Max}_\lambda(\rho)$ and $x_0\in\pi_1({\rm Max}_\lambda(\rho))\cap X$.

Conversely, let $x_0\in\pi_1({\rm Max}_\lambda(\rho))\cap X$. Then there exists $(x_0,t_0)\in{\rm Max}_\lambda(\rho)$. As $\pi_1(x_0,t_0)\in X$, then $(x_0,t_0)\in\Gamma$ and $t_0=f(x_0)$. As $(x_0,f(x_0))\in{\rm Max}_\lambda(\rho)$, there exist an open neighborhood $W^{x_0}\subset Y$ of $x_0$ and $\veps>0$ such that if $(x,t)\in\ol{\Gamma}\cap(W^{x_0}\times(f(x_0)-\veps,f(x_0)+\veps))$, then $t=\rho(x,t)\leq\rho(x_0,f(x_0))=f(x_0)$. In addition, if $(x,t)\in\Gamma\cap(W^{x_0}\times(f(x_0)-\veps,f(x_0)+\veps))$, it holds $f(x)=t\leq f(x_0)$. Thus, $f(x)=t\leq f(x_0)$ for each $x\in X\cap W^{x_0}$, so $x_0\in{\rm Max}_\lambda(f)$.

By (i) we have $\pi_1({\rm Max}_\lambda(\rho))\cap X=\pi_1({\rm Max}_\lambda(\rho)\cap\Gamma)$.

(v) Let $x_0\in X$ and let $\xi_0:=(x_0,f(x_0))\in\ol{\Gamma}$ be the unique point in $\ol{\Gamma}$ such that $\pi_1(\xi_0)=x_0$. As the family $\{{\rm Max}_\lambda(\rho)\}_{\lambda\in\R}$ is locally finite, there exists an open neighborhood $V^{\xi_0}\subset\ol{\Gamma}$ such that $V^{\xi_0}$ meets only finitely many ${\rm Max}_\lambda(\rho)$. As $f$ is $Y$-compact, the restriction $\pi_1|_{\ol{\Gamma}}:\ol{\Gamma}\to Y$ is a proper map. Then $C:=\pi_1(\ol{\Gamma}\setminus V^{\xi_0})$ is a closed subset of $Y$ that does not contain $x_0$. Thus, $U^{x_0}:=Y\setminus C$ is an open neighborhood of $x_0$ in $Y$. 

We have $\pi_1^{-1}(U^{x_0})\cap\ol{\Gamma}=\ol{\Gamma}\setminus\pi_1^{-1}(\pi_1(\ol{\Gamma}\setminus V^{\xi_0}))\subset V^{\xi_0}$. Let $\lambda\in\R$ be such that $U^{x_0}$ meets ${\rm Max}_\lambda(f)=\pi_1({\rm Max}_\lambda(\rho)\cap\Gamma)$. Then $\pi_1^{-1}(U^{x_0})\subset V^{\xi_0}$ meets 
$$
\pi_1^{-1}(\pi_1({\rm Max}_\lambda(\rho)\cap\Gamma))={\rm Max}_\lambda(\rho)\cap\Gamma
$$ 
(the last equality holds because $\pi_1|_{\Gamma}:\Gamma\to X$ is bijective and $\pi_1(\ol{\Gamma}\setminus\Gamma)=\cl(X)\setminus X$). Consequently, there are finitely many $\lambda\in\R$ such that $U^{x_0}$ meets $\pi_1({\rm Max}_\lambda(\rho)\cap\Gamma)={\rm Max}_\lambda(f)$. We conclude that the family $\{{\rm Max}_\lambda(f)\}_{\lambda\in\R}$ is locally finite in $Y$.

(vi) As ${\rm Max}(\rho)=\bigsqcup_{\lambda\in\R}{\rm Max}_\lambda(\rho)$ and ${\rm Max}(f)=\bigsqcup_{\lambda\in\R}{\rm Max}_\lambda(f)$, we have
\begin{multline*}
{\rm Max}(f)=\bigsqcup_{\lambda\in\R}{\rm Max}_\lambda(f)=\bigsqcup_{\lambda\in\R}\pi_1({\rm Max}_\lambda(\rho))\cap X\\
=\pi_1\Big(\bigsqcup_{\lambda\in\R}{\rm Max}_\lambda(\rho)\Big)\cap X=\pi_1({\rm Max}(\rho))\cap X.
\end{multline*}
In addition, the equality $\pi_1({\rm Max}(\rho))\cap X=\pi_1({\rm Max}(\rho)\cap\Gamma)$ follows from statement (i). 
\end{proof}

\begin{lem}[(Locally isolated description of local maxima)]\label{isolated}
Let $f:X\to\R$ be a continuous function on a topological space $X$ and assume that the family $\{{\rm Max}_\lambda(f)\}_{\lambda\in\R}$ is locally finite. We have:
\begin{itemize}
\item[(i)] If $x\in\cl({\rm Max}(f))$, there exists an open neighborhood $V^x$ that meets only ${\rm Max}_{f(x)}(f)$. In particular, the family $\{{\rm Max}_\lambda(f)\}_{\lambda\in\R}$ is locally isolated in $X$.
\item[(ii)] Assume in addition that the family $\{{\rm Min}_\lambda(f)\}_{\lambda\in\R}$ is locally finite. If $x_0\in X$ is a local maximum of $f$, there exists an open neighborhood $U\subset X$ of $x_0$ such that ${\rm Extr}(f)\cap U={\rm Max}(f)\cap U={\rm Max}_{f(x_0)}(f)\cap U=\{f-f(x_0)=0\}\cap U$.
\item[(iii)] For each $\lambda\in\R$ there exists and open neighborhood $V\subset X$ of $\{f=\lambda\}$ such that ${\rm Max}(f)\cap V={\rm Max}_\lambda(f)$.
\end{itemize}
\end{lem}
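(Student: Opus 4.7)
\smallskip

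\noindent\textbf{Proof plan.} The three statements are essentially consequences of part (i), so I would build the argument in that order.

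For (i), fix $x\in\cl({\rm Max}(f))$. By local finiteness of $\{{\rm Max}_\lambda(f)\}_{\lambda\in\R}$ choose an open neighborhood $W^x\subset X$ of $x$ that meets only finitely many members ${\rm Max}_{\lambda_1}(f),\dots,{\rm Max}_{\lambda_r}(f)$. Since each ${\rm Max}_{\lambda_i}(f)\subset\{f=\lambda_i\}$ and $f$ is continuous, the inclusion $x\in\cl({\rm Max}(f)\cap W^x)=\cl\bigl(\bigcup_i ({\rm Max}_{\lambda_i}(f)\cap W^x)\bigr)$ forces $f(x)=\lambda_j$ for some (unique) index $j$. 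Choose $\veps>0$ with $|\lambda_i-\lambda_j|>\veps$ for every $i\neq j$ and let $V^x:=W^x\cap f^{-1}((\lambda_j-\veps,\lambda_j+\veps))$, which is open by the continuity of $f$. Then $V^x$ meets no ${\rm Max}_{\lambda_i}(f)$ with $i\neq j$, hence only ${\rm Max}_{f(x)}(f)$. For a point $x\notin\cl({\rm Max}(f))$ one simply takes a neighborhood of $x$ disjoint from ${\rm Max}(f)$, so the family is locally isolated in $X$.

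Statement (iii) is essentially (i) applied pointwise on $\{f=\lambda\}$: for each $x\in\{f=\lambda\}$ either $x\in\cl({\rm Max}(f))$ and (i) yields an open neighborhood $V^x$ with $V^x\cap{\rm Max}(f)\subset{\rm Max}_{f(x)}(f)={\rm Max}_\lambda(f)$, or $x\notin\cl({\rm Max}(f))$ and one picks $V^x$ disjoint from ${\rm Max}(f)$. Setting $V:=\bigcup_{x\in\{f=\lambda\}}V^x$ we get an open neighborhood of $\{f=\lambda\}$ such that ${\rm Max}(f)\cap V\subset{\rm Max}_\lambda(f)$, and the opposite inclusion is automatic because ${\rm Max}_\lambda(f)\subset\{f=\lambda\}\subset V$.

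For (ii), since $x_0$ is a local maximum fix an open neighborhood $V_1$ of $x_0$ with $f\leq f(x_0)$ on $V_1$. Apply (i) to $f$ at $x_0\in{\rm Max}(f)\subset\cl({\rm Max}(f))$ to produce an open neighborhood $V_2$ of $x_0$ meeting only ${\rm Max}_{f(x_0)}(f)$. The main subtlety is dealing with local minima near $x_0$: if $x_0\notin\cl({\rm Min}(f))$ take $V_3$ disjoint from ${\rm Min}(f)$; otherwise apply (i) to $-f$, which is legitimate by the extra hypothesis, to obtain a neighborhood $V_3$ meeting only ${\rm Min}_{f(x_0)}(f)$. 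Set $U:=V_1\cap V_2\cap V_3$. The inclusion ${\rm Max}(f)\cap U\subset{\rm Max}_{f(x_0)}(f)\cap U$ is immediate from the choice of $V_2$; the equality ${\rm Max}_{f(x_0)}(f)\cap U=\{f=f(x_0)\}\cap U$ follows because any $y\in V_1$ with $f(y)=f(x_0)$ automatically inherits the maximum condition from the inequality $f\leq f(x_0)$ on the open set $V_1\ni y$. Finally, any $y\in{\rm Min}(f)\cap U$ satisfies $f(y)=f(x_0)$ (either vacuously when $V_3$ avoids ${\rm Min}(f)$, or from $V_3$ otherwise), so $y\in\{f=f(x_0)\}\cap U={\rm Max}_{f(x_0)}(f)\cap U$, which gives ${\rm Extr}(f)\cap U={\rm Max}(f)\cap U$ and closes the chain of equalities.

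The only mildly delicate point is the very last step of (ii), where one needs both parts of (i) (applied to $f$ and to $-f$) together with the geometric observation that a nearby point sharing the value $f(x_0)$ in the region $\{f\leq f(x_0)\}$ is automatically a local maximum; the rest of the argument is a straightforward bookkeeping of open neighborhoods.
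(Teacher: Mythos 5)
Your argument is correct and follows essentially the same route as the paper: part (i) via local finiteness plus continuity to isolate the single level $f(x)$, part (iii) by covering $\{f=\lambda\}$ with the neighborhoods from (i), and part (ii) by combining (i) for $f$ and for $-f$ with the observation that points of $\{f=f(x_0)\}$ inside the defining neighborhood of the local maximum are themselves local maxima (the paper phrases this last step via ${\rm Max}_\mu(f)=\{f=\mu\}\setminus\cl(\{f>\mu\})$, but the content is the same).
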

\begin{proof}
(i) Assume $x\in\cl({\rm Max}(f))=\bigsqcup_{\lambda\in\R}\cl({\rm Max}_\lambda(f))\subset\bigsqcup_{\lambda\in\R}\{f=\lambda\}$ and write $\mu:=f(x)$. Then $x$ belongs only to $\cl({\rm Max}_\mu(f))$. Let $U\subset X$ be an open neighborhood of $x$ that meets only finitely many of the sets ${\rm Max}_\lambda(f)$, say for the distinct values $\mu,\lambda_1,\ldots,\lambda_r\in\R$. Then each open neighborhood $V^x\subset U\setminus\bigcup_{i=1}^r\{f=\lambda_i\}$ of $x$ meets only ${\rm Max}_\mu(f)$.

(ii) Write $\mu:=f(x_0)$ and let $U_0\subset X$ be an open neighborhood of $x_0$ such that ${\rm Max}(f)\cap U_0={\rm Max}_\mu(f)\cap U_0$. As ${\rm Max}_\mu(f)=\{f=\mu\}\setminus\cl(\{f>\mu\})$, the difference $V:=U_0\setminus\cl(\{f>\mu\})\subset X$ is an open neighborhood of $x_0$ such that ${\rm Max}(f)\cap V=\{f=\mu\}\cap V$. By (i) and the previous argument (applied to $-f$) we find an open neighborhood $W\subset X$ of $x_0$ such that either ${\rm Min}(f)\cap W={\rm Min}_\mu(f)\cap W=\varnothing$ or ${\rm Min}(f)\cap W={\rm Min}_\mu(f)\cap W=\{f=\mu\}\cap W$. If we define $U:=V\cap W$, we have ${\rm Extr}(f)\cap U=\{f-f(x_0)=0\}\cap U={\rm Max}(f)\cap U={\rm Max}_{f(x_0)}(f)\cap U$, as required.

(iii) Fix $\lambda\in\R$ and pick $x\in Z_\lambda:=\{f=\lambda\}$. By (i) there exists an open neighborhood $V^x\subset X$ of $x$ such that ${\rm Max}(f)\cap V^x={\rm Max}_\lambda(f)\cap V^x$. Define $V:=\bigcup_{x\in X}V^x$ and observe that ${\rm Max}(f)\cap V={\rm Max}_\lambda(f)$, as required.
\end{proof}

\subsection{Basic properties of ${\mathfrak F}$-maps}
We study next some basic properties of ${\mathfrak F}$-maps where ${\mathfrak F}$ is a weak category. We will focus mainly in the cases when ${\mathfrak F}$ is either the subanalytic, semianalytic or $C$-semianalytic categories. Contrary to what happens in the semialgebraic case, the composition of ${\mathfrak F}$-maps needs not to be an ${\mathfrak F}$-map. Consider for instance the subanalytic subset $X:=\R\setminus\{0\}$ of $\R$ and the subanalytic functions $f:X\to\R,\ x\mapsto 1/x$ and $g:\R\to\R,\ y\mapsto\sin(y)$. The composition $g\circ f:X\to\R,\ x\mapsto\sin(\frac{1}{x})$ is not a subanalytic function. 

Let $M,N,P$ denote real analytic manifolds. We recall next for the sake of completeness well-known sufficient conditions to guarantee that the composition of two subanalytic functions is subanalytic.

\begin{lem}\label{compos}
Let $f:X\subset M\to N$ and $g:Y\subset N\to P$ be subanalytic maps such that $f(X)\subset Y$. Assume that either $f$ is $M$-compact or the inverse image under $g$ of each compact subset of $P$ under $g$ is relatively compact in $N$. Then $g\circ f$ is subanalytic.
\end{lem}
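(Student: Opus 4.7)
\emph{Proof proposal.} Denote by $\Gamma_f\subset M\times N$ and $\Gamma_g\subset N\times P$ the graphs of $f$ and $g$, both subanalytic by hypothesis. The strategy is to realize $\Gamma_{g\circ f}\subset M\times P$ as the image of a subanalytic subset of $M\times N\times P$ under a linear projection, to show that this projection is proper on the closure of that set, and to conclude by the standard fact that the image of a subanalytic set under a real analytic map that is proper on the closure is again subanalytic (see e.g.\ \cite{bm}).

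Concretely, set $S:=(\Gamma_f\times P)\cap(M\times\Gamma_g)\subset M\times N\times P$, which is subanalytic as the intersection of two subanalytic sets. A triple $(x,y,p)$ belongs to $S$ if and only if $x\in X$, $y=f(x)$, $y\in Y$ and $g(y)=p$; the hypothesis $f(X)\subset Y$ makes the condition $y\in Y$ automatic, so $S=\{(x,f(x),g(f(x))):x\in X\}$. Consequently, if $\pi:M\times N\times P\to M\times P$ is the analytic projection onto the first and third factors, then $\pi(S)=\Gamma_{g\circ f}$ and $\pi|_S$ is bijective onto its image.

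The crux is the properness of $\pi|_{\overline S}:\overline S\to M\times P$. Fix a compact $L\subset M\times P$, let $K:=\pi_M(L)$ and $Q:=\pi_P(L)$, and choose a compact neighborhood $K'$ of $K$ in $M$ and a compact neighborhood $Q'$ of $Q$ in $P$ (using local compactness). Given $(x,y,p)\in\overline S\cap\pi^{-1}(L)$ and a sequence $(x_n,y_n,p_n)\in S$ with $(x_n,y_n,p_n)\to(x,y,p)$, we have $x_n\in K'$ and $p_n\in Q'$ for $n$ sufficiently large. If $f$ is $M$-compact, then $y_n=f(x_n)\in f(K'\cap X)$, which is relatively compact in $N$; if instead $g^{-1}$ sends compact sets to relatively compact sets, then $(y_n,p_n)\in\Gamma_g$ together with $p_n\in Q'$ forces $y_n\in g^{-1}(Q')$, again relatively compact in $N$. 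In either case the tail of $(y_n)$ is confined to a fixed compact set $L'\subset N$, so $y\in L'$. Therefore $\overline S\cap\pi^{-1}(L)\subset K\times L'\times Q$, which is compact; being closed in the ambient Hausdorff space, $\overline S\cap\pi^{-1}(L)$ is itself compact. This establishes properness of $\pi|_{\overline S}$, and the cited theorem yields that $\Gamma_{g\circ f}=\pi(S)$ is subanalytic, so $g\circ f$ is a subanalytic map.

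The substantial part of the argument is the properness step: the two alternative hypotheses are exactly what is required to prevent the auxiliary $N$-coordinate from escaping to infinity along sequences in $S$ whose $\pi$-images remain in a compact set. Without either hypothesis the argument genuinely fails, in harmony with the counterexample $f(x)=1/x$, $g(y)=\sin y$ recalled in the paragraph preceding the lemma.
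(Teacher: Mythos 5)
Your proposal is correct and follows essentially the same route as the paper: both realize $\Gamma_{g\circ f}$ as the image of the fiber-product set $\{(x,f(x),g(f(x))):x\in X\}$ under the projection $M\times N\times P\to M\times P$, and both reduce properness on the closure to confining the middle $N$-coordinate over a compact set via exactly the two inclusions into $f(K'\cap X)$ or $g^{-1}(Q')$ according to which hypothesis holds. The only differences are cosmetic (you build $S$ directly as $(\Gamma_f\times P)\cap(M\times\Gamma_g)$ rather than via a diagonal in $M\times N\times N\times P$, and you phrase properness with sequences, which is legitimate since the manifolds are metrizable).
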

\begin{proof}
Let $\Gamma_f:=\{(x,f(x)):\ x\in X\}$ and $\Gamma_g:=\{(y,g(y)):\ y\in Y\}$ be the graphs of $f$ and $g$. The set 
$$
T:=(\Gamma_f\times\Gamma_g)\cap\{(x,u,y,v)\in M\times N\times N\times P:\ u=y\}
$$
is a subanalytic subset of $M\times N\times N\times P$. Thus, $Z:=\{(x,f(x),g(f(x))):\ x\in X\}$ is a subanalytic subset of $M\times N\times P$. Consider the projection $\pi:M\times N\times P\to M\times P,\ (x,u,v)\mapsto (x,v)$. If we prove that the restriction $\pi|_{\cl(Z)}$ is proper, $\Gamma_{g\circ f}=\pi(Z)$ is a subanalytic subset of $M\times P$ and $g\circ f$ is subanalytic. Let $K\subset M\times P$ be a compact set and let us check that $\pi^{-1}(K)\cap\cl(Z)$ is compact. Let $K_1$ be a compact subset of $M$ and let $K_2$ be a compact subset of $P$ such that $K\subset\Int(K_1)\times\Int(K_2)$. One can check:
\begin{align*}
&\pi^{-1}(K)\cap\cl(Z)\subset K_1\times\cl(f(K_1\cap X))\times K_2,\\
&\pi^{-1}(K)\cap\cl(Z)\subset K_1\times\cl(g^{-1}(K_2))\times K_2.
\end{align*}
These inclusions imply under the hypothesis of the statement that $\pi^{-1}(K)\cap\cl(Z)$ is compact, as required.
\end{proof}
\begin{remark}\em
If either $f$ is continuous and $X$ is closed in $M$ or $g$ is proper, then $g\circ f$ is subanalytic (because the hypotheses of Lemma \ref{compos} are fulfilled).
\end{remark}

Similarly, inverse images of ${\mathfrak F}$-sets under ${\mathfrak F}$-maps need not to be ${\mathfrak F}$-sets (see Example \ref{ptd}). If ${\mathfrak F}$ is the category of subanalytic sets, a sufficient condition to guarantee that the inverse image under a subanalytic function $f:X\subset M\to N$ of a subanalytic subset of $N$ is a subanalytic subset of $M$ is that $f$ is $M$-compact. Let us recall next how this is easily proved. 

\begin{lem}\label{pre}
Let $X\subset M$ be a subanalytic set and let $f:X\to N$ be an $M$-compact subanalytic map. Then $f^{-1}(Y)$ is a subanalytic subset of $M$ for each subanalytic subset $Y$ of $N$.
\end{lem}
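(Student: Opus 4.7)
The plan is to realize $f^{-1}(Y)$ as a projection of a subanalytic set along a map that is proper on the relevant closure, and then invoke the standard characterization of subanalytic sets as images of subanalytic sets under proper (subanalytic) maps. Concretely, let $\pi_1:M\times N\to M$ denote the projection onto the first factor and let $\Gamma_f:=\{(x,f(x)):x\in X\}\subset M\times N$ be the graph of $f$. Since $f$ is a subanalytic map, $\Gamma_f$ is a subanalytic subset of $M\times N$ by definition. The observation I would start with is the tautological identity
$$
f^{-1}(Y)=\pi_1(\Gamma_f\cap(M\times Y)).
$$

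Next I would note that $M\times Y$ is subanalytic in $M\times N$ (as the product of subanalytic sets, since $M\in{\mathfrak F}(M)$ and $Y$ is subanalytic in $N$), so the intersection $A:=\Gamma_f\cap(M\times Y)$ is a subanalytic subset of $M\times N$. It therefore remains to show that $\pi_1(A)$ is subanalytic in $M$. For this it suffices (by the standard fact that proper images of subanalytic sets along analytic projections are subanalytic, equivalently by the characterization used in the proof of Lemma \ref{compos}) to verify that the restriction $\pi_1|_{\cl(A)}:\cl(A)\to M$ is proper.

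To establish properness, I would invoke Lemma \ref{reduction}(iii) applied to the $M$-compact map $f$ (with $Y=N$ there), which guarantees that $\pi_1|_{\cl(\Gamma_f)}:\cl(\Gamma_f)\to M$ is proper. Since $\cl(A)\subset\cl(\Gamma_f)$ and $\cl(A)$ is closed in $M\times N$, the restriction $\pi_1|_{\cl(A)}$ is the restriction of a proper map to a closed subset of its domain, hence proper. Consequently $\pi_1(A)=f^{-1}(Y)$ is subanalytic in $M$, as required.

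The main (minor) subtlety is the properness step: without the $M$-compactness assumption on $f$ the projection $\pi_1|_{\cl(\Gamma_f)}$ can fail to be proper, and indeed the conclusion of the lemma fails in general (as the example $f(x)=1/x$ on $X=\R\setminus\{0\}$ composed with $g=\sin$ in the discussion preceding Lemma \ref{compos} illustrates). The whole point of the hypothesis is precisely to make Lemma \ref{reduction}(iii) available, after which the argument is purely formal manipulation of subanalytic sets.
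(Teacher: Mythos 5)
Your proposal is correct and follows essentially the same route as the paper's proof: write $f^{-1}(Y)=\pi_1(\Gamma_f\cap(M\times Y))$, note this intersection is subanalytic, and use Lemma \ref{reduction}(iii) (via $M$-compactness) to get properness of the projection restricted to $\cl(\Gamma_f)$, hence subanalyticity of the image. Your extra remark that properness passes to the closed subset $\cl(A)\subset\cl(\Gamma_f)$ is a small precision the paper leaves implicit, but the argument is the same.
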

\begin{proof}
Denote $\Gamma_f$ the graph of $f$ and consider the projection $\rho:M\times N\to M$ onto the first factor. As $f$ is subanalytic, $\cl(\Gamma_f)$ is a subanalytic subset of $M\times N$. Observe that $f^{-1}(Y)=\rho(\Gamma_f\cap(M\times Y))$ and $\Gamma_f\cap(M\times Y)$ is a subanalytic subset of $M\times N$. As the restriction $\rho|_{\cl(\Gamma_f)}:\Gamma_f\to M,\ (x,y)\mapsto x$ is by Lemma \ref{reduction} proper, $f^{-1}(Y)$ is a subanalytic subset of $M$.
\end{proof}

If $X\subset M$ is a closed subanalytic set and $f:X\to N$ is a continuous subanalytic function, $f$ is an $M$-compact subanalytic map. Thus, $f^{-1}(Y)$ is a subanalytic subset of $M$ for each subanalytic subset $Y$ of $N$. In the semianalytic and the $C$-semianalytic categories, we can guarantee that the inverse image of an ${\mathfrak F}$-set under an ${\mathfrak F}$-map is an ${\mathfrak F}$-set if we limit our scope to the restrictions to elements of ${\mathfrak F}(M)$ of analytic maps $f:M\to N$. However, the behavior with respect to the fibers is always net.

\begin{lem}
Let ${\mathfrak F}$ denote either the subanalytic, semianalytic or $C$-semianalytic categories. Let $X\subset M$ and let $f:X\to N$ be a ${\mathfrak F}$-map. Then the fibers of $f$ are ${\mathfrak F}$-sets.
\end{lem}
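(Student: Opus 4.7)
The plan is to realize the fiber $f^{-1}(y_0)$ over a point $y_0 \in N$ as (essentially) the intersection $\Gamma_f \cap (M \times \{y_0\})$ inside $M \times N$, then pass from the slice $M \times \{y_0\}$ back to $M$ via the identification \eqref{elias}. Concretely, I would check that $M\times\{y_0\}$ belongs to ${\mathfrak F}(M\times N)$, intersect with the graph, descend to the submanifold $M\times\{y_0\}$ using \eqref{elias}, and transfer along the obvious analytic diffeomorphism.

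First I would verify that the singleton $\{y_0\}$ belongs to ${\mathfrak F}(N)$. Via a Whitney immersion of $N$ as a closed analytic submanifold of some $\R^n$, the point $y_0$ is cut out on $N$ by the restrictions of the $n$ polynomial equations defining $\{y_0\}$ in $\R^n$, so $\{y_0\}$ is a $C$-analytic subset of $N$. Since each of the three weak categories in the statement (subanalytic, semianalytic, $C$-semianalytic) contains algebraic intersections and, in particular, $C$-analytic sets, this gives $\{y_0\} \in {\mathfrak F}(N)$. By the product axiom of a weak category, $M \times \{y_0\} \in {\mathfrak F}(M \times N)$, and since ${\mathfrak F}$ is closed under intersection, the set
\[
G := \Gamma_f \cap (M \times \{y_0\})
\]
belongs to ${\mathfrak F}(M \times N)$.

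Now $M \times \{y_0\}$ is a closed analytic submanifold of $M \times N$, so equation \eqref{elias} applies and yields $G \in {\mathfrak F}(M \times \{y_0\})$. The first projection restricts to an analytic diffeomorphism $\pi_1|_{M \times \{y_0\}} : M \times \{y_0\} \to M$ sending $G$ bijectively onto $f^{-1}(y_0)$. Since each of the three concrete categories is invariant under analytic diffeomorphisms (for semianalytic and $C$-semianalytic sets this is immediate from their defining descriptions by analytic equalities and inequalities, and for subanalytic sets it is a classical fact, e.g.\ because a closed analytic embedding is proper), one concludes $f^{-1}(y_0) \in {\mathfrak F}(M)$.

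The main obstacle here is really only a bookkeeping verification: that single points sit in ${\mathfrak F}(N)$ and that the categories transport along $M \cong M \times \{y_0\}$. The substantive step has already been done in \eqref{elias}, which is exactly the mechanism that lets one forget the ambient $N$-direction once the graph has been sliced.
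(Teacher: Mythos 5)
Your argument is correct, and the underlying identity is the same one the paper uses: in both proofs the fiber is realized as $\pi_1(\Gamma_f\cap(M\times\{y_0\}))$. What differs is how the pushforward to $M$ is justified. The paper argues case by case: for subanalytic sets it checks by hand that $\pi_1$ restricted to $\cl(\Gamma_f\cap(M\times\{p\}))$ is proper (which is immediate, as you in effect observe, because that closure sits inside the slice $M\times\{p\}$, on which $\pi_1$ is a homeomorphism onto $M$); for semianalytic and $C$-semianalytic sets it takes a local presentation of $\Gamma_f$ by analytic equalities and inequalities and substitutes $\y=p$ to obtain a presentation of the fiber. You instead treat all three categories uniformly: slice the graph, apply \eqref{elias} to land in ${\mathfrak F}(M\times\{y_0\})$, and transport along the analytic diffeomorphism $M\times\{y_0\}\cong M$. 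This is shorter and makes the mechanism transparent, at the cost of invoking two facts that are not axioms of a weak category -- that singletons belong to ${\mathfrak F}(N)$ and that the three concrete categories are invariant under analytic diffeomorphisms. Both are true and you justify them adequately (the first because a point is a $C$-analytic set, the second being implicitly used by the paper itself when it identifies ${\mathfrak F}(M)$ with the ${\mathfrak F}$-subsets of $\varphi(M)$ for a Whitney immersion $\varphi$), so there is no gap; the paper's case analysis is merely more self-contained, staying closer to the definitions of each category.
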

\begin{proof}
Assume first that ${\mathfrak F}$ is the category of subanalytic sets and let $p\in N$. Let $\rho:M\times N\to M$ be the projection onto the first factor, which is an analytic function. It holds that $f^{-1}(p)=\rho(\Gamma_f\cap(M\times\{p\}))$. If we prove that the restriction 
$$
\rho|_{\cl(\Gamma_f\cap(M\times\{p\}))}:\cl(\Gamma_f\cap(M\times\{p\}))\to M
$$ 
is proper, $f^{-1}(p)$ is a subanalytic subset of $M$.

We have $\cl(\Gamma_f\cap(M\times\{p\}))\subset M\times\{p\}$. Let $K$ be a compact subset of $M$. Then $\rho^{-1}(K)\cap\cl(\Gamma_f\cap(M\times\{p\}))\subset K\times\{p\}$ is a closed subset of a compact set, so it is a compact subset of $M\times N$. Consequently, $\rho|_{\cl(\Gamma_f\cap(M\times\{p\}))}$ is a proper map.

Assume next ${\mathfrak F}$ is the category of semianalytic sets. Let $p\in N$ and let $(x,y)\in M\times N$. As the graph $\Gamma_f$ is a semianalytic set, there exist open neighborhoods $V$ of $x$ and $W$ of $p$ and finitely many analytic functions $g_i,f_{ij}\in\an(V\times W)$ such that 
$$
\Gamma_f\cap(V\times W)=\bigcup_i\{f_{i1}>0,\ldots,f_{is}>0,g_i=0\}.
$$
Thus, $f^{-1}(p)\cap V=\bigcup_i\{f_{i1}(\x,p)>0,\ldots,f_{is}(\x,p)>0,g_i(\x,p)=0\}$, where $g_i(\x,p),f_{ij}(\x,p)\in\an(V)$. Consequently, $f^{-1}(p)$ is a semianalytic subset of $M$.

If ${\mathfrak F}$ is the category of $C$-semianalytic sets, the proof is similar.
\end{proof}

\begin{remark}\em
Let ${\mathfrak F}$ be a weak category of subanalytic sets. There are continuous maps between real algebraic manifolds such that the inverse image of each ${\mathfrak F}$-set is an ${\mathfrak F}$-set, but such maps are not even subanalytic. Let $f:\R\to\R$ be any strictly increasing continuous function whose graph is not a subanalytic subset of $\R\times\R$, so $f$ provides a homeomorphism between $\R$ and an open interval of $\R$. Observe that the inverse image of an interval is again an interval. If $S$ is an ${\mathfrak F}$-subset of $\R$, it is a locally finite union of points and intervals. Thus, the same happens with its pre-image under $f$. Consequently, $f$ is a function such that the inverse image of each ${\mathfrak F}$-subset of $\R$ is an ${\mathfrak F}$-subset of $\R$ but $f$ is not itself subanalytic. In fact, the inverse image of each semialgebraic subset of $\R$ under $f$ is a semialgebraic subset of $\R$ and $f$ is not semialgebraic.
\end{remark}

We present next some examples to enlighten some particularities of ${\mathfrak F}$-maps that differ from the net behavior of semialgebraic maps or more generally definable maps of an o-minimal structure.

\begin{examples}\label{ptd}\em
(i) For each integer $m\geq1$ consider the $C$-semianalytic set $X_m:=\{m\x\geq\y>(m-1)\x>0\}$. Observe that $X:=\bigsqcup_{m\geq1}X_m=\{\x>0,\y>0\}$ is a $C$-semianalytic subset of $M:=\R^2$. Consider the function $f:X\to\R$ such that $f|_{X_m}=(-1)^mm$. The graph 
$$
\Gamma_f=\bigsqcup_{m\geq1}X_m\times\{(-1)^mm\}
$$
is a $C$-semianalytic subset of $\R^3$, because it is a locally finite union of basic $C$-semianalytic sets. Thus, $f$ is a $C$-semianalytic function, but it is not $M$-compact, because $f((0,1]^2)=\Z$. Observe that $Y:=f^{-1}((0,+\infty))=\bigsqcup_{k\geq1}X_{2k}$ is not a subanalytic subset of $\R^2$ because its family of connected components is not locally finite in $\R^2$. However, the restriction $g:=f|_Y:Y\to\R$ is still a $C$-semianalytic function, because its graph $\Gamma_g=\bigsqcup_{k\geq1}X_{2k}\times\{2k\}$ is a $C$-semianalytic subset of $\R^3$. The sets ${\rm Min}(f)=\bigsqcup_{k\geq1}\Int(X_{2k})\sqcup\bigsqcup_{k\geq1}X_{2k-1}$ and ${\rm Max}(f)=\bigsqcup_{k\geq1}X_{2k}\sqcup\bigsqcup_{k\geq1}\Int(X_{2k-1})$ are not subanalytic subsets of $\R^2$, again because their families of connected components are not locally finite in $\R^2$.

Consider also the continuous function 
$$
g:X=\bigsqcup_{k\geq1}(X_{2k}\cup X_{2k-1})\to\R,\ (x,y)\mapsto
\begin{cases}
\frac{y}{x}-k&\text{if $(x,y)\in X_{2k}$,}\\
k-1&\text{if $(x,y)\in X_{2k-1}$,}
\end{cases}
$$
whose graph is $\Gamma:=\bigsqcup_{k\geq1}(\Gamma_{2k}\cup\Gamma_{2k-1})$ where
$$
\Gamma_{2k-1}:=X_{2k-1}\times\{k-1\}\quad\text{and}\quad\Gamma_{2k}:=\{2k\x\geq\y>(2k-1)\x>0,\x\z=\y-k\x\}.
$$
As the family $\{\Gamma_{2k-1},\Gamma_{2k}\}_{k\geq1}$ is locally finite in $\R^3$, the graph of $g$ is a $C$-semianalytic subset of $\R^3$. Thus, $g$ is $C$-semianalytic, but it is not $M$-compact because it maps $X\cap[0,1]^2$ to $[0,+\infty)$. The sets ${\rm Min}(g)=\bigsqcup_{k\geq1}X_{2k-1}$ and ${\rm Max}(g)=\bigsqcup_{k\geq1}Y_{2k-1}$, where 
$$
Y_{2k-1}:=(\cl(X_{2k-1})\cap X\setminus X_{2k-1})\cup\Int(X_{2k-1}),
$$
are not subanalytic because the families of their connected components are not locally finite in $\R^2$.

Let $Y:=X\cap\{\y\leq1\}$ and consider the subanalytic function 
$$
h:Y\to\R,\ (x,y)\mapsto\begin{cases}
g(x,y)&\text{if $(x,y)\in X\cap\{\y<1\}$,}\\
k&\text{if $(x,y)\in(X_{2k+1}\cup X_{2k})\cap\{\y=1\}$ for $k\geq1$.}
\end{cases}
$$
The graph of $h$ is the subanalytic subset of $\R^3$ given by
$$
(\Gamma\cap((X\cap\{\y<1\})\times\R))\cup\bigcup_{k\geq1}((X_{2k+1}\cup X_{2k})\cap\{\y=1\}\times\{k\}),
$$
so $h$ is subanalytic (although it is not continuous). Observe that ${\rm Max}(h)=\{\x>0,\y=1\}\cup\bigsqcup_{k\geq1}Y_{2k-1}\cap\{\y<1\}$ is a connected set. However, ${\rm Max}(h)$ it is not a subanalytic subset of $\R^2$. Otherwise ${\rm Max}(h)\cap\{\y<1\}=\bigsqcup_{k\geq1}(Y_{2k-1}\cap\{\y<1\})$ will be subanalytic, but it is not because the family of its connected components is not locally finite in $\R^2$. Thus, ${\rm Max}(h)$ is not subanalytic although it is connected (contrast this with Theorem \ref{main0} where the continuity of the subanalytic function is assumed).

(ii) Consider the analytic map $g:\R^2\to\R^3,\ (x_1,x_2)\mapsto(x_1,x_1x_2,x_1e^{x_2})$ and the subanalytic subset $X:=g([-1,1]^2\setminus\{\x_1=0\})=g([-1,1]^2)\setminus\{(0,0,0)\}$ of $\R^3$ (Osgood's example), which is not a semianalytic subset of $\R^2$. Define the continuous function 
$$
f:X\to\R,\ (x_1,x_2,x_3)\mapsto
\frac{x_2}{x_1}
$$
and note that
$$
f\circ g:[-1,1]^2\setminus\{\x_1=0\}\to\R,\ (x_1,x_2)\mapsto x_2.
$$
The graph of $f$ is 
\begin{multline*}
\Gamma_f:=\{(x_1,x_1x_2,x_1e^{x_2},x_2):\ (x_1,x_2)\in[-1,1]^2\setminus\{\x_1=0\}\},\\
=\{\x_2=\x_1\x_4,\x_3=\x_1e^{\x_4},0<|\x_1|\leq 1,|\x_2|\leq1,\},
\end{multline*}
which is a $C$-semianalytic subset of $\R^4$. Thus $f$ is a $C$-semianalytic function, whereas its domain $X$ is not even a semianalytic set. Observe that $f(X)=(f\circ g)([-1,1]^2\setminus\{\x_1=0\})=[-1,1]$. In addition, \em the inverse image under $f$ of an interval $I$ of $\R$ is a semianalytic subset of $\R^3$ if and only if $I\cap[-1,1]$ is a singleton\em.

If $\lambda\in\R$, then the fiber $f^{-1}(\lambda)$ is either the empty set if $\lambda\not\in[-1,1]$ or the $C$-semianalytic set $\{x_1\cdot(1,\lambda,e^\lambda):\ 0<|x_1|\leq1\}$ otherwise. Let $I\subset\R$ be an interval and denote $J:=I\cap[-1,1]$. We have $f^{-1}(I)=f^{-1}(J)=f^{-1}(\Int(J))\sqcup f^{-1}(\partial J)$. As $\partial J$ is a finite set, $f^{-1}(\partial J)$ is a $C$-semianalytic subset of $\R^3$. Observe that $\Int(J):=(a,b)\subset[-1,1]$ where $a\leq b$ and 
\begin{multline*}
f^{-1}((a,b))=g((f\circ g)^{-1}(a,b))\\
=g(([-1,1]\setminus\{0\})\times(a,b))=\{(x_1,x_1x_2,x_1e^{x_2}):\ 0<|x_1|\leq1,\ a<x_2<b\},
\end{multline*}
which is a subanalytic subset of $\R^3$ of dimension $2$ if $a<b$ because it is the image of a semianalytic set under the proper analytic map $g|_{[-1,1]^2}$. Let us check that $f^{-1}((a,b))$ is not a semianalytic subset of $\R^3$ if $a<b$. We claim: \em If $G(u,v,w)$ is an analytic function in three variables on a neighborhood of the origin such that $G(x_1,x_1x_2,x_1e^{x_2})=0$ for $(x_1,x_2)\in([-1,1]\setminus\{0\})\times(a,b)$ with $x$ close to $0$, then $G=0$\em.

Write $G(u,v,w)=\sum_{j\geq0}G_j(u,v,w)$ where $G_j(u,v,w)$ is a homogeneous polynomial of degree $j$. Then for each fixed $y\in(a,b)$ we have
$$
0=G(x_1,x_1x_2,x_1e^{x_2})=\sum_{j\geq0}G_j(x_1,x_1x_2,x_1e^{x_2})=\sum_{j\geq0}x_1^jG_j(1,x_2,e^{x_2})
$$
for $0<|x_1|\leq1$ close to $0$. Therefore, $G_j(1,x_2,e^{x_2})=0$ for each $j\geq0$ and each $y\in(a,b)$, so each $G_j=0$ and $G=0$. Consequently, the smallest real analytic set containing (the germ at the origin of) $f^{-1}((a,b))$ is the whole $\R^3$, so the $2$-dimensional subanalytic set $f^{-1}((a,b))$ is not semianalytic.

(iii) Let $\lambda:\N\to\Q$ be a bijection such that $\lambda(0)=0$ and denote $\lambda_n:=\lambda(n)$. Consider the sequence
$$
\mu_n:=\begin{cases}
\lambda_k&\text{if $n=2k$},\\
\min\{\lambda_k,\lambda_{k+1}\}-1&\text{if $n=2k+1$}
\end{cases}
$$
for each $n\geq0$. Let $f:[0,+\infty)\to\R$ be the continuous subanalytic function whose graph is the polygonal that connects orderly the points $(n,\mu_n)$ for $n\geq0$ and let us extend it (continuously and subanalytically) to $\R$ defining $f(x)=-x$ for each $x<0$. Observe that ${\rm Max}(f)=\{2k:\ k\in\N\}$ and $f({\rm Max}(f))=\Q$, which is a dense countable subset of $\R$. Contrast this example with Lemma \ref{semiom}.
\end{examples}

\section{Proof of the main results}\label{s3}

In this section we prove the main results of this work (Theorem \ref{main0}, Theorem \ref{main1}, Lemma \ref{main2} and Proposition \ref{sharp}). We begin by showing Lemma \ref{main2}, that is, if $f:X\to\R$ is a continuous subanalytic function on a closed subanalytic subset $X$ of a real analytic manifold $M$, then the family $\{{\rm Max}_\lambda(f)\}_{\lambda\in\R}$ is locally finite in $M$ and each set ${\rm Max}_\lambda(f)$ and ${\rm Max}(f)$ are subanalytic subsets of $M$. 

\begin{proof}[of Lemma \em\ref{main2}]
Let $\phi:M\hookrightarrow\R^p$ be an analytic immersion of $M$ as a closed analytic submanifold of $\R^p$ and let us identify $M$ with $\phi(M)$. Thus, $X$ is a closed subanalytic subset of $\R^p$ and $f:X\to\R$ is an $M$-compact function. Consider the analytic diffeomorphism $\psi:\R\to(-1,1),\ t\mapsto\frac{t}{\sqrt{1+t^2}}$. As $f$ is $M$-compact, the composition $\psi\circ f:X\to(-1,1)$ is a continuous subanalytic function, so we may assume in what follows $|f|<1$. Denote $\Gamma\subset\R^{p+1}$ the graph of $f$, which is a subanalytic subset of $\R^{p+1}$. 

Let $m\geq1$ be an integer and consider the hypercube $[-m,m]^n$ for each $n\geq1$. Consider the homothety $h_m:\R^p\to\R^p$ of center the origin and ratio $\frac{1}{m}$, which maps the hypercube $[-m,m]^p$ onto the hypercube $[-1,1]^p$ and the analytic diffeomorphism 
$$
\varphi:\R^p\to(-1,1)^p,\ x:=(x_1,\ldots,x_p)\mapsto\Big(\frac{x_1}{\sqrt{1+x_1^2}},\ldots,\frac{x_p}{\sqrt{1+x_p^2}}\Big).
$$
Let ${\mathfrak S}:=\{{\mathfrak S}_n\}_{n\geq1}$ be the o-minimal structure $\R_{\rm an}$. Observe that 
\begin{align*}
X_m&:=\varphi^{-1}(h_m(X\cap[-m,m]^p))\in{\mathfrak S}_p\\
\Gamma_m&:=(\varphi\times\id_\R)^{-1}((h_m\times\id_\R)(\Gamma\cap[-m,m]^{p+1}))\in{\mathfrak S}_{p+1}, 
\end{align*}
that is, $X_m$ is a definable set and $f_m:=f\circ h_m^{-1}\circ\varphi:X_m\to(-1,1)$ is a definable function of the o-minimal structure $\R_{\rm an}$. By Lemma \ref{semiom} the set ${\rm Max}(f_m)$ is a global subanalytic subset of $\R^p$ and the family $\{{\rm Max}_\lambda(f_m)\}_{\lambda\in\R}$ is finite. Thus, ${\rm Max}(f|_{X\cap(-m,m)^p})={\rm Max}(f)\cap(-m,m)^p$ is a subanalytic subset of $(-m,m)^p$ and the family $\{{\rm Max}_\lambda(f|_{X\cap(-m,m)^p})={\rm Max}_\lambda(f)\cap(-m,m)^p\}_{\lambda\in\R}$ is finite for each $m\geq1$. Consequently, the family $\{{\rm Max}_\lambda(f)\}_{\lambda\in\R}$ is locally finite in $\R^p$.

As $f$ is an $M$-compact subanalytic map, the inverse image under $f$ of the intervals of $\R$ are subanalytic subsets of $M$. Consequently, each set ${\rm Max}_\lambda(f)=\{f-\lambda=0\}\setminus\cl(\{f-\lambda>0\})$ is a subanalytic subset of $M$. Hence, ${\rm Max}(f)=\bigsqcup_{\lambda\in\R}{\rm Max}_\lambda(f)$ is a locally finite union of subanalytic subsets of $M$, so ${\rm Max}(f)$ is itself a subanalytic subset of $M$, as required. 
\end{proof}

\begin{proof}[of Theorem \em\ref{main1}]
As $f$ is an $M$-compact subanalytic function, the inverse image of each interval of $\R$ under $f$ is a subanalytic subset of $M$ (Lemma \ref{pre}). Thus, each set ${\rm Max}_\lambda(f)=\{f-\lambda=0\}\setminus\cl(\{f-\lambda>0\})$ is a subanalytic subset of $M$. Let $\Gamma_f\subset X\times\R\subset M\times\R$ be the graph of $f$ and let $\ol{\Gamma}_f$ be its closure in $M\times\R$. Let $\pi_1:M\times\R\to M$ and $\pi_2:M\times\R\to\R$ be the projections onto the first and the second factors of $M\times\R$ and let $\rho:=\pi_2|_{\ol{\Gamma}_f}:\ol{\Gamma}_f\to\R$ be the restriction of $\pi_2$ to the closed subanalytic subset $\ol{\Gamma}_f$ of $M\times\R$. As $\rho:\ol{\Gamma}_f\to\R$ is an analytic function on a closed subanalytic subset of $M\times\R$, we deduce by Lemma \ref{main2} that the family $\{{\rm Max}_\lambda(\rho)\}_{\lambda\in\R}$ is locally finite in $M\times\R$. As $f$ is an $M$-compact subanalytic function, we deduce by Lemma \ref{reduction} that the family $\{{\rm Max}_\lambda(f)\}_{\lambda\in\R}$ is locally finite in $M$. Thus, ${\rm Max}(f)=\bigsqcup_{\lambda\in\R}{\rm Max}_\lambda(f)$ is a subanalytic subset of $M$, as required.
\end{proof}

We are now ready to prove Theorem \ref{main0}.

\begin{proof}[of Theorem \em \ref{main0}]
The implications (i) $\Longrightarrow$ (ii) and (iv) $\Longrightarrow$ (i) are immediate (because ${\mathfrak F}$ is a weak category of subanalytic sets, the family of the connected components of a subanalytic subset of $M$ is a locally finite family of subanalytic subsets of $M$ and ${\mathfrak F}(M)$ is closed for locally finite unions). The implication (iii) $\Longrightarrow$ (iv) follows from Lemma \ref{isolated}. Let us prove next the remaining implication (ii) $\Longrightarrow$ (iii).

Let $\Gamma_f\subset X\times\R\subset M\times\R$ be the graph of $f$ and let $\ol{\Gamma}_f$ be its closure in $M\times\R$. Let $\pi_1:M\times\R\to M$ and $\pi_2:M\times\R\to\R$ be the projections onto the first and the second factors of $M\times\R$ and let $\rho:=\pi_2|_{\ol{\Gamma}_f}:\ol{\Gamma}_f\to\R$ be the restriction of $\pi_2$ to the closed subanalytic subset $\ol{\Gamma}_f$ of $M\times\R$. By Lemma \ref{reduction} we have ${\rm Max}_\lambda(f)=\pi_1({\rm Max}_\lambda(\rho)\cap\Gamma)$ for each $\lambda\in\R$. By Lemma \ref{main2} the family $\{{\rm Max}_\lambda(\rho)\}_{\lambda\in\R}$ is locally finite in $M\times\R$, so in particular it is a countable family. Thus, the family $\{{\rm Max}_\lambda(f)\}_{\lambda\in\R}$ is countable and $f({\rm Max}(f))=\bigcup_{\lambda\in\R}f({\rm Max}_\lambda(f))$ is a countable subset of $\R$.

Let $C$ be a connected component of ${\rm Max}(f)$. As $f$ is continuous, $f(C)\subset f({\rm Max}_\lambda(f))$ is connected, so it is a singleton. If we write $f(C)=\{\lambda\}$, we have $C\subset{\rm Max}(f)\cap\{f=\lambda\}={\rm Max}_\lambda(f)$.

Let $\{C_j\}_{j\in J}$ be the collection of the connected components of ${\rm Max}(f)=\bigsqcup_{\lambda\in\R}{\rm Max}_\lambda(f)$. As each $C_j$ is contained in ${\rm Max}_{\lambda_j}(f)$ for some $\lambda_j\in\R$, we deduce that each ${\rm Max}_\lambda(f)$ is a union of connected components of ${\rm Max}(f)$. As the connected components of ${\rm Max}(f)$ constitute a locally finite family of $M$ and ${\rm Max}_\lambda(f)\cap{\rm Max}_\mu(f)=\varnothing$ if $\lambda\neq\mu$, the family $\{{\rm Max}_\lambda(f)\}_{\lambda\in\R}$ is locally finite in $M$, as required.
\end{proof}

We have seen in Example \ref{ptd} (i) that the set of local maxima of a (non-continuous) subanalytic function can be connected but not subanalytic. The following example shows that the set of local maxima ${\rm Max}(f)$ of a (non-continuous) subanalytic function $f$ can be connected and subanalytic whereas its family of level sets $\{{\rm Max}_\lambda(f)\}_{\lambda\in\R}$ is not locally finite (contrast this with Theorem \ref{main0} where the continuity of $f$ is assumed). 

\begin{example}\label{ex1}\em
Let ${\mathfrak F}$ be a weak category that contains algebraic intersections. Let $M\subset\R^n$ be a real analytic manifold and let $X\in{\mathfrak F}(M)$ of dimension $\geq1$. Let $p\in X$ be such that $\dim(X_p)=\dim(X)$ and assume that $p=0$ is the origin of $\R^n$. Define $Z_m:=\{\frac{1}{m}\leq\|\x\|<\frac{1}{m-1}\}$ for $m\geq2$ and $Z_1:=\{\|\x\|\geq1\}$, which are elements of ${\mathfrak F}(\R^n)$. Observe that $\bigsqcup_{m\geq1}Z_m=\R^n\setminus\{0\}$. Consider the $C$-semianalytic function $f:\R^n\to\R$ given by $f|_{Z_m}=m$ for $m\geq1$ and $f(p)=0$. We have ${\rm Max}(f)=\R^n\setminus\{0\}\in{\mathfrak F}(\R^n)$ and 
$$
{\rm Max}_\lambda(f)=\begin{cases}
Z_m&\text{if $\lambda=m$ for some $m\geq1$,}\\
\varnothing&\text{otherwise.}
\end{cases}
$$
The family $\{{\rm Max}_\lambda(f)\}_{\lambda\in\R}$ is not locally finite at the origin. Define $g:=f|_X$ and observe that ${\rm Max}(g)={\rm Max}(f)\cap X=X\setminus\{0\}\in{\mathfrak F}(M)$ and 
$$
{\rm Max}_\lambda(g)={\rm Max}_\lambda(f)\cap X=\begin{cases}
Z_m\cap X&\text{if $\lambda=m$ for some $m\geq1$,}\\
\varnothing&\text{otherwise.}
\end{cases}
$$ 
The family $\{{\rm Max}_\lambda(g)\}_{\lambda\in\R}$ is not locally finite at the origin.
\end{example}

We prove next Proposition \ref{sharp}.

\begin{proof}[of Proposition \em\ref{sharp}]
Define $Z_1:=\{\|\x\|\geq1\}$, $Z_m:=\{\frac{1}{m}\leq\|\x\|<\frac{1}{m-1}\}$ for $m\geq2$ and $Z:=\R^n\setminus\{0\}=\bigsqcup_{k\geq1}Z_k$. Consider the function
$$
g:Z\to\R,\ x\mapsto
\begin{cases}
k-1&\text{if $x\in Z_{2k-1}$,}\\
\frac{1}{\|x\|}-k&\text{if $x\in Z_{2k}$}
\end{cases}
$$
and observe that $Z_{2k}:=\{2k\geq\frac{1}{\|\x\|}>2k-1\}$. Thus, $g$ is continuous and its graph is $\Gamma:=\bigsqcup_{k\geq1}(\Gamma_{2k}\cup\Gamma_{2k-1})$, where
$$
\Gamma_{2k-1}:=Z_{2k-1}\times\{k-1\}\quad\text{and}\quad\Gamma_{2k}:=\Big\{2k\geq\frac{1}{\|\x\|}>2k-1,\z=\frac{1}{\|\x\|}-k\Big\},
$$
is a locally finite union of semialgebraic sets. As the family $\{\Gamma_{2k-1},\Gamma_{2k}\}_{k\geq1}$ is locally finite in $\R^n$, the graph of $g$ is a $C$-semianalytic subset of $\R^n$. Observe that $g$ is not $M$-compact, as it maps the compact set $\R^n\setminus Z_1$ onto $[0,+\infty)$. The sets 
\begin{align*}
&{\rm Max}(g)=\bigsqcup_{k\geq1}Z_{2k-1},\\
&{\rm Min}(g)=\bigsqcup_{k\geq1}(\cl(Z_{2k-1})\cap Z\setminus Z_{2k-1})\cup\Int(Z_{2k-1}),\\
&{\rm Extr}(g)=\bigsqcup_{k\geq1}\cl(Z_{2k-1})\cap Z
\end{align*}
are not subanalytic because the families of their connected components are not locally finite in $\R^n$.

Define $g_k:=\min\{\max\{g,k\},k+1\}-k:\R^n\setminus\{0\}\to[0,1]$, which is a semialgebraic function whose support is the punctured closed ball $\supp(g_k)=\ol{B}(0,\frac{1}{k})\setminus\{0\}$. The family $\{\supp(g_k)\}_{k\geq1}$ is locally finite in $\R^n\setminus\{0\}$) and $g:=\sum_{k\geq1}g_k$. Let $I\subset\R$ be an upperly bounded interval and let $\ell\geq1$ be such that $I\subset(-\infty,\ell]$. Then $g^{-1}(I)=(\sum_{k=1}^{2\ell+1}g_k)^{-1}(I)$ is a semialgebraic set. If $I\subset\R$ is not upperly bounded, pick $\ell\in I$ and write $J:=(-\infty,\ell)\cap I$. Then $I=J\cup[\ell,+\infty)$ and 
$$
g^{-1}(J\cup[\ell,+\infty))=\Big(\sum_{k=1}^{2\ell+1}g_k\Big)^{-1}(J)\cup g^{-1}([\ell,+\infty))=\Big(\sum_{k=1}^{2\ell+1}g_k\Big)^{-1}(J)\cup B\Big(0,\frac{1}{2\ell+1}\Big)\setminus\{0\}
$$
is a semialgebraic set. 

Denote $\cl_M(\cdot):=\cl(\cdot)\cap M$. Pick a point $p\in\cl_M(X)\setminus X$ and assume that $p=0$ is the origin. Consider the restriction $f:=g|_{X}:X\to\R$ whose graph belongs to ${\mathfrak F}(M\times\R)$ and the inverse image under $f$ of each interval of $\R$ belongs to ${\mathfrak F}(M)$. Consider also the restriction $h:=h|_{\cl_M(X)\setminus\{0\}}:\cl_M(X)\setminus\{0\}\to\R$. The subanalytic set $\cl_M(X)\setminus\{0\}$ is a closed subanalytic subset of $M\setminus\{0\}$. Then the family $\{{\rm Max}_\lambda(h)\}_{\lambda\in\R}$ is locally finite in $M\setminus\{0\}$ (analogously it happens with $\{{\rm Min}_\lambda(h)\}_{\lambda\in\R}$ and $\{{\rm Extr}_\lambda(h)\}_{\lambda\in\R}$). Denote ${\tt T}$ the operators ${\rm Max}$, ${\rm Min}$ or ${\rm Extr}$ and ${\tt T}_\lambda$ the operators ${\rm Max}_\lambda$, ${\rm Min}_\lambda$ or ${\rm Extr}_\lambda$ for $\lambda\in\R$. Let us prove: \em The family $\{{\tt T}_\lambda(f)\}_{\lambda\in\R}$ is not locally finite in $M$\em. 

Observe that ${\tt T}_\lambda(f)={\tt T}_\lambda(h)\cap X$ for each $\lambda\in\R$. Let $\{C_i\}_{i\in I}$ be the collection of the connected components of ${\tt T}(f)$. As $\cl_M(X)\setminus\{0\}$ is closed in the real analytic manifold $M\setminus\{0\}$ and $h:\cl_M(X)\setminus\{0\}\to\R$ is a continuous subanalytic function, $\{{\tt T}_\lambda(f)={\tt T}_\lambda(h)\cap X\}_{\lambda\in\R}$ is by Theorem \ref{main1} a locally finite family in $M\setminus\{0\}$, so it is a countable family. Thus, $f({\tt T}(f))=\bigsqcup_{\lambda\in\R}f({\tt T}_\lambda(f))$ is a countable subset of $\R$. Let $C$ be a connected component of ${\tt T}(f)$. Then $f(C)$ is a connected subset of $\R$ contained in the countable set $f({\tt T}(f))$, so it is a singleton, that is, $f(C)=\{\lambda\}$ for some $\lambda\in\R$ and $C\subset{\tt T}_\lambda(f)={\tt T}(f)\cap\{f=\lambda\}$. Let $\{C_i\}_{i\in I}$ be the collection of the connected components of ${\tt T}(f)$. As $\bigsqcup_{i\in I}C_i={\tt T}(f)=\bigsqcup_{\lambda\in\R}{\tt T}_\lambda(f)$, each set ${\tt T}_{\lambda}(f)$ is a union of some $C_i$. 

Let us restrict to those values $\lambda$ that are positive integers and use the letter $k$ to denote them instead of $\lambda$. We have $\bigsqcup_{k\geq1}{\tt T}_k(f)$ is a union of some of the sets $C_i$. If ${\tt T}(f)$ is a subanalytic subset of $M$, then the collection $\{C_i\}_{i\in I}$ is a locally finite family of subanalytic subsets of $M$. Thus, the family $\{{\tt T}_k(f)\}_{k\geq1}$ is a locally finite family of subanalytic subsets of $M$. As $\Int(Z_{2k+1})\cap X\subset{\tt T}_k(f)$ for each $k\geq1$, also the family $\{\Int(Z_{2k+1})\cap X\}_{k\geq1}$ of open subanalytic subsets of $X$ is locally finite in $M$, but this is a contradiction because $\Int(Z_{2k+1})\cap X\neq\varnothing$ for $k$ large enough and $\{\Int(Z_{2k+1})\cap X)\}_{k\geq1}$ is not locally finite at the origin. Consequently, ${\tt T}(f)$ is not a subanalytic subset of $M$, as required.
\end{proof}

\subsection{Strict local maxima of subanalytic functions}
As a consequence of Theorem \ref{main1} we have the following.

\begin{cor}[(Strict local maxima)]\label{main3}
Let $f:X\to\R$ be an $M$-compact subanalytic function on a subanalytic subset $X$ of a real analytic manifold $M$. Let $T\subset X$ be the set of strict local maxima of $f$. Then $T$ is a discrete subset of $M$.
\end{cor}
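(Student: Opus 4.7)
The plan is to argue by contradiction, showing that no point of $M$ can be an accumulation point of $T$. The key input is Theorem \ref{main1}, which gives us two crucial properties simultaneously: the family $\{{\rm Max}_\lambda(f)\}_{\lambda\in\R}$ is locally finite in $M$, and each ${\rm Max}_\lambda(f)$ is itself a subanalytic subset of $M$ (so its family of connected components is locally finite in $M$).

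The starting observation is that if $x_0\in T$ with $f(x_0)=\lambda$, then by definition there exists an open neighborhood $W\subset M$ of $x_0$ with $f(x)<\lambda$ for every $x\in W\cap X$ with $x\neq x_0$. Hence $W\cap\{f=\lambda\}=\{x_0\}$, so $x_0$ is an isolated point of $\{f=\lambda\}$ and a fortiori an isolated point of the subset ${\rm Max}_\lambda(f)\subset\{f=\lambda\}$. This means every strict local maximum is an isolated point of its own level set of local maxima, so it constitutes a connected component of that level set.

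Now suppose $T$ is not discrete in $M$. Then there exist pairwise distinct points $\{x_n\}_{n\geq1}\subset T$ converging to some $x_0\in M$. Passing to a subsequence, one of the following two cases occurs:
\begin{itemize}
\item[(a)] The values $\{f(x_n)\}_{n\geq1}$ form an infinite set, so after a further subsequence they are pairwise distinct. Then the infinitely many pairwise distinct sets ${\rm Max}_{f(x_n)}(f)$ all meet every neighborhood of $x_0$ (via $x_n$), contradicting the local finiteness of $\{{\rm Max}_\lambda(f)\}_{\lambda\in\R}$ at $x_0$ provided by Theorem \ref{main1}.
\item[(b)] All $f(x_n)$ equal a common value $\lambda$. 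Then $\{x_n\}_{n\geq1}\subset{\rm Max}_\lambda(f)$ and by the observation each $x_n$ is an isolated point, hence a connected component, of ${\rm Max}_\lambda(f)$. So ${\rm Max}_\lambda(f)$ has infinitely many distinct connected components accumulating at $x_0$, contradicting the fact that ${\rm Max}_\lambda(f)$ is a subanalytic subset of $M$ (whose family of connected components is locally finite in $M$).
\end{itemize}

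Both cases yield a contradiction, proving that $T$ is discrete in $M$. There is no serious obstacle here: the argument is entirely a contradiction argument that invokes two complementary conclusions of Theorem \ref{main1}. The only subtle point is recognizing that both parts of that theorem are needed, since a priori the sequence of values $f(x_n)$ could be either constant or non-constant, and these two possibilities require different local finiteness statements to reach a contradiction.
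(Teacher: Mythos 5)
Your proof is correct and follows essentially the same route as the paper's: the key observation that a strict local maximum is an isolated point, hence a connected component, of its level set ${\rm Max}_{\lambda}(f)$, combined with the two facts supplied by Theorem \ref{main1} (local finiteness of the family $\{{\rm Max}_\lambda(f)\}_{\lambda\in\R}$ and subanalyticity, hence local finiteness of the connected components, of each ${\rm Max}_\lambda(f)$). The paper concludes directly that $T$ is a locally finite union of points, whereas you phrase it as a contradiction via a convergent sequence split into two cases, but the content is identical.
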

\begin{proof}
Let $x_0\in T$ and let $\lambda_0=f(x_0)$. It holds $x_0\in{\rm Max}_{\lambda_0}(f)$ and as it is a strict local maxima, it is an isolated point, so $x_0$ is a connected component of ${\rm Max}_{\lambda_0}(f)$. As the family $\{{\rm Max}_\lambda(f)\}_{\lambda\in\R}$ is by Theorem \ref{main1} locally finite and the family of the connected components of each ${\rm Max}_\lambda(f)$ is locally finite, we conclude that $T$ is a locally finite union of points. This means that $T$ is a discrete subset of $M$, as required.
\end{proof}
\begin{remark}\em
Let $f:M\to\R$ be an analytic function on a real analytic manifold $M\subset\R^n$ and let $H$ be the set of critical points $x\in M$ of $f$ such that the Hessian of $f$ at $x$ is negative definite. Then $f$ is $M$-compact and $H$ is contained in the set $T$ of strict local maxima of $f$. In particular $H$ is a discrete subset of $M$. 
\end{remark}

Again the condition that $f$ is $M$-compact in Corollary \ref{main3} is not superfluous.
\begin{example}\em
Let $X:=(0,1]\subset\R$ and let $f:X\to\R$ be the continuous function whose graph is the polygonal through the points $\{(\frac{1}{2k-1},2k-3),(\frac{1}{2k},2k):\ k\geq1\}$ ordered in terms of their first coordinates, that is,
$$
f(x):=\begin{cases}
-6k(2k-1)x+8k-3&\text{if $\frac{1}{2k}\leq x\leq\frac{1}{2k-1}$,}\\
2k(2k+1)x-1&\text{if $\frac{1}{2k+1}\leq x\leq\frac{1}{2k}$}.
\end{cases}
$$
The graph of $f$ is a $C$-semianalytic set because it is the union of the two locally finite families of basic $C$-semianalytic sets 
\begin{align*}
&\Big\{\frac{1}{2k}\leq\x\leq\frac{1}{2k-1},\y=-6k(2k-1)\x+8k-3\Big\}_{k\geq1},\\
&\Big\{\frac{1}{2k+1}\leq\x\leq\frac{1}{2k},\y=2k(2k+1)\x-1\Big\}_{k\geq1}
\end{align*}
of $\R^2$. Thus, $f$ is $C$-semianalytic, but $f$ is not $M$-compact because $f((0,1])=(0,+\infty)$. It holds ${\rm Max}(f)=\{\frac{1}{2k}:\ k\geq1\}$, which is not a discrete subset of $M$.
\end{example}

\subsection{Non-openness points of continuous subanalytic functions} 
Open maps play an important role in analysis. Some classic theorems state the openness of various regular maps, for instance, the Banach openness principle concerning linear operators (in functional analysis), and the open map theorem dealing with holomorphic functions (in complex analysis). To decide if a map is open is in general a difficult question. This problem was studied in \cite{ct} and later a complete answer was provided in \cite{gr} for Nash maps $f:\R^n\to\R^n$. The previous result was extended in \cite{hsch} to analytic maps.

Recall that a function $f$ from a topological space $X$ into a topological space $Y$ is \em open \em if it maps open sets onto open sets. We say that $f$ is open at a point $x\in X$ if $f(x)$ belongs to $\Int(f(U))$ for each open neighborhood $U\subset X$ of $x$. Plainly, $f$ is open if and only if it is open at every point $x\in X$. In \cite{bpw} it is proved the following result.

\begin{cor}[(Non-openness points, {\cite{bpw}})]
If $X$ is a locally connected space and a function $f:X\to\R$ is continuous, then the set of points of local extrema of $f$ coincides with the set of its non-openness points. 
\end{cor}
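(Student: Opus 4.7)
The plan is to prove the two inclusions separately, using only elementary topology and the hypothesis of local connectedness.

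For the easy direction, I would first show that every local extremum is a non-openness point. Suppose $x_0 \in {\rm Max}(f)$, so there is an open neighborhood $U \subset X$ of $x_0$ with $f(y) \leq f(x_0)$ for all $y \in U$. Then $f(U) \subset (-\infty, f(x_0)]$, so every open interval around $f(x_0)$ in $\R$ contains points strictly greater than $f(x_0)$, none of which lie in $f(U)$. Hence $f(x_0) \notin \Int(f(U))$, which means $f$ is not open at $x_0$. The argument for ${\rm Min}(f)$ is symmetric (or one applies the preceding to $-f$).

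For the converse, suppose $f$ is not open at $x_0$. By definition there exists an open neighborhood $V \subset X$ of $x_0$ such that $f(x_0) \notin \Int(f(V))$. Using the local connectedness of $X$, I would let $U$ be the connected component of $x_0$ in $V$; since $X$ is locally connected, $U$ is open in $X$, and clearly $U \subset V$, so $f(U) \subset f(V)$ and therefore $f(x_0) \notin \Int(f(V)) \supset \Int(f(U))$, giving $f(x_0) \notin \Int(f(U))$. Because $U$ is connected and $f$ is continuous, $f(U)$ is a connected subset of $\R$, hence an interval $I$ containing $f(x_0)$.

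The key observation is now that an interval-point which is not interior must be an endpoint. Since $f(x_0) \in I$ but $f(x_0) \notin \Int(I)$, either $f(x_0) = \sup I$ or $f(x_0) = \inf I$. In the first case, $f(y) \leq f(x_0)$ for every $y \in U$, so $x_0 \in {\rm Max}(f)$; in the second, $x_0 \in {\rm Min}(f)$. Either way $x_0 \in {\rm Extr}(f)$, which finishes the proof.

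I do not anticipate a significant obstacle: the only subtle point is the passage from an arbitrary witness neighborhood $V$ to a \emph{connected} one $U$, which is precisely where local connectedness is used, and the need to check that replacing $V$ by $U \subset V$ preserves the non-openness condition at $x_0$ (it does, because $f(U) \subset f(V)$ forces $\Int(f(U)) \subset \Int(f(V))$, so $f(x_0) \notin \Int(f(V))$ implies $f(x_0) \notin \Int(f(U))$).
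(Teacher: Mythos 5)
Your argument is correct. The paper gives no proof of this corollary --- it is quoted directly from \cite{bpw} --- so there is nothing to compare against, but your two inclusions are exactly the standard argument: the forward direction needs neither continuity nor local connectedness, and the converse correctly isolates the one place local connectedness enters (passing to the open connected component of the witness neighborhood so that $f(U)$ is an interval, whose non-interior points are its endpoints).
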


Let $M$ be a real analytic manifold. As each subanalytic set $X\subset M$ is locally connected \cite{bm}, Lemma \ref{main2} applies and we conclude that both the sets of non-openness points ${\rm NOp}(f)={\rm Extr}(f)$ and of openness points ${\rm Op}(f)=X\setminus{\rm Extr}(f)$ of a continuous subanalytic function $f:X\to\R$ on a closed semianalytic subset $X$ of $M$ are subanalytic subsets of $M$. More generally, if ${\mathfrak F}$ is a weak category of subanalytic sets and $f:X\to\R$ is a continuous subanalytic function on a closed set $X\in{\mathfrak F}(M)$ such that the inverse images of the intervals of $\R$ belong to ${\mathfrak F}(M)$, then ${\rm Op}(f)=X\setminus{\rm Extr}(f)\in{\mathfrak F}(M)$ and ${\rm NOp}(f)\in{\mathfrak F}(M)$. In particular, the previous applies if ${\mathfrak F}$ is either the category of semianalytic or $C$-semianalytic subsets of $M$. If $f$ is a non-constant analytic function and $X$ is an irreducible $C$-analytic set \cite{fe}, then $\dim({\rm Extr}(f))\leq\dim(X)-1$ and $\Int({\rm Op}(f))$ is a dense open $C$-semianalytic subset of the ($C$-semianalytic) set of points of maximal dimension of $X$.

If $X\in{\mathfrak F}(M)$ is not closed in $M$, there exists by Proposition \ref{sharp} a continuous subanalytic function $f:X\to\R$ whose graph is an ${\mathfrak F}$-set, the inverse images under $f$ of the intervals of $\R$ belong to ${\mathfrak F}(M)$ and ${\rm Extr}(f)\not\in{\mathfrak F}(M)$, because it is not even a subanalytic subset of $M$. Thus, ${\rm Op}(f)=X\setminus{\rm Extr}(f)\not\in{\mathfrak F}(M)$ (and it is not even a subanalytic subset of $M$).

\section{Locally normal crossings real analytic functions}\label{s4}

As usual we denote $\an(M)$ the ring of real analytic functions on a real analytic manifold $M$. In the analytic case, the local extrema of an analytic function $f:M\to\R$ are contained in its set of critical points. A point $x\in M$ is \em critical for $f$ \em if there exists a chart $\varphi:M\to\R^m$ such that $\varphi(x)=0$ and $\nabla(f\circ\varphi^{-1})(0):=(\frac{\partial(f\circ\varphi^{-1})}{\partial\x_1}(0),\ldots,\frac{\partial(f\circ\varphi^{-1})}{\partial\x_m}(0))=0$. As one can expect the previous definition does not depend on the chosen chart. The set of critical points of $f$ in $M$ will be denoted ${\tt C}(f)$. It is easily checked that it is a closed subset of $M$.

\begin{remarks}\label{31}\em
(i) If we restrict our target to analytic functions on real analytic manifolds, we can even restrict to the case of analytic functions on open subsets of $\R^n$ using tubular neighborhoods. Let $(\Omega,\rho)$ be an analytic tubular neighborhood of $M$ in $\R^n$. If $x_0\in M$ is a local maximum of $f$, then the points $y\in\rho^{-1}(x_0)$ are local maxima of $f\circ\rho$. As $f\circ\rho$ is constant on each fiber of $\rho$, each extremal point $y_0$ of $f\circ\rho$ provides a extremal point $x_0:=\rho(y_0)$ of $f$. In fact, the set ${\rm Max}(f)$ of local maxima of $f$ coincides with the intersection ${\rm Max}(f\circ\rho)\cap M$. Thus, we could focus on analytic functions defined on open subsets of $\R^n$. 

(ii) If $f:\Omega\to\R$ is an analytic function on an open set $\Omega\subset\R^n$, the set of critical points of $f$ is ${\tt C}(f)=\{\frac{\partial f}{\partial\x_1}=0,\ldots,\frac{\partial f}{\partial\x_n}=0\}$.

(iii) If $X$ is a connected real analytic manifold (or more generally a pure dimensional irreducible $C$-semianalytic set \cite{fe}) and $f$ is a non-constant analytic function on $X$, then ${\rm Max}(f)\cap{\rm Min}(f)=\varnothing$ by Remark \ref{imext} (ii) and the identity principle.
\end{remarks}

A particular case of (real) analytic functions for which it is easy to characterize local extrema is that of \em locally normal crossings \em analytic functions.

\begin{defn}[(Local normal crossings, {\cite[Def. 4.3]{bm}})]\em
Let $f\in\an(M)$ be an analytic function that is not constant on any connected component of $M$. We say that $f$ is \em locally normal crossings \em if each point $x_0$ of $M$ admits a coordinate neighborhood $U$, with coordinates $\x:=(\x_1,\ldots,\x_m)$, such that $f(x)=x_1^{\alpha_1}\cdots x_m^{\alpha_m}h(x)$ for each $x\in U$, where $h\in\an(U)$, $h$ vanishes nowhere in $U$ and each $\alpha_i\geq0$.
\end{defn}

The following result allows to improve the description of locally normal crossings analytic functions.

\begin{lem}\label{change}
Let $W\subset\R^m$ be an open neighborhood of the origin and let $g\in\an(W)$ be an analytic function that vanishes nowhere on $W$. Define $h:=\x_m^\alpha g$ for some $\alpha\geq1$. Then there exists an analytic change of coordinates 
$$
\psi:U\to V\subset W,\ (\y_1,\ldots,\y_m)\mapsto(\y_1,\ldots,\y_{m-1},\y_m u_m)
$$ 
on a small open neighborhood $V$ of the origin, where $u_m\in\an(U)$ vanishes nowhere on $U$, such that $(h\circ\psi)(\y_1,\ldots,\y_m)=\pm\y_m^\alpha$ and $\psi$ keeps invariant the coordinate hyperplanes.
\end{lem}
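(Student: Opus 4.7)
The plan is to construct $\psi$ as the inverse of an explicit analytic diffeomorphism that rescales only the last coordinate. First, since $g$ is nonvanishing, it has constant sign on a connected neighborhood of the origin; set $\epsilon := \operatorname{sign} g(0) \in \{-1, +1\}$, so $\epsilon g > 0$ on a neighborhood $W' \subset W$ of $0$. Because $t \mapsto t^{1/\alpha}$ is real analytic on $(0, +\infty)$, the composite $v := (\epsilon g)^{1/\alpha}$ is then analytic and strictly positive on $W'$ and satisfies $v^\alpha = \epsilon g$.

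Next, I would consider the analytic map
\[
\phi : W' \to \R^m, \qquad \phi(x) := (x_1, \ldots, x_{m-1}, v(x)\, x_m).
\]
Its Jacobian at $0$ is lower triangular with diagonal $(1, \ldots, 1, v(0))$, and $v(0) \neq 0$, so by the inverse function theorem $\phi$ restricts to an analytic diffeomorphism from some open neighborhood $V \subset W'$ of $0$ onto an open neighborhood $U$ of $0$. Take $\psi := \phi^{-1} : U \to V$. Since $\phi$ fixes the first $m-1$ coordinates, so does $\psi$; write its last component as $w(y)$. As $v(0) \neq 0$, the equation $v(x)x_m = y_m$ forces $x_m = 0$ whenever $y_m = 0$, so $w$ vanishes identically on $\{y_m = 0\}$ and therefore factors as $w(y) = y_m u_m(y)$ for a unique analytic $u_m$ on $U$. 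A direct computation using $\phi \circ \psi = \operatorname{id}$ gives $u_m(y_1, \ldots, y_{m-1}, 0) = 1/v(y_1, \ldots, y_{m-1}, 0)$, hence $u_m(0) = 1/v(0) \neq 0$; after possibly shrinking $U$, $u_m$ is nonvanishing on $U$. Thus $\psi$ has the stated form, and every coordinate hyperplane is preserved (the first $m-1$ because those coordinates are fixed, $\{y_m = 0\}$ because its image is $\{x_m = 0\}$).

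Finally, for $y \in U$ with $x := \psi(y)$, the identity $\phi(x) = y$ gives $v(x)\, x_m = y_m$, whence
\[
(h \circ \psi)(y) = x_m^\alpha\, g(x) = \frac{y_m^\alpha}{v(x)^\alpha}\, g(x) = \frac{y_m^\alpha}{\epsilon\, g(x)}\, g(x) = \epsilon\, y_m^\alpha,
\]
that is, $\pm y_m^\alpha$ with sign $\operatorname{sign} g(0)$. The only real technical point is the existence of the analytic $\alpha$-th root $v = (\epsilon g)^{1/\alpha}$, which is supplied by the constant-sign property of $g$; everything else reduces to the inverse function theorem together with the observation that $w$ is divisible by $y_m$.
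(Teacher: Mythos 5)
Your proof is correct, but it runs in the opposite direction from the paper's. The paper never inverts anything: it posits $\psi$ directly in the required form $(\y_1,\ldots,\y_{m-1},\y_m(b+\xi))$ with $b:=a^{-1/\alpha}$, $a:=g(0)$, and produces the correction term $\xi$ by applying the implicit function theorem to the single scalar equation $F(\y,\t):=(b+\t)^\alpha g(\y_1,\ldots,\y_{m-1},\y_m(b+\t))-1=0$, which is exactly the condition $u_m^\alpha\, (g\circ\psi)=1$ forcing $h\circ\psi=\pm\y_m^\alpha$. You instead build the forward map $\phi(x)=(x_1,\ldots,x_{m-1},v(x)x_m)$ with $v:=(\epsilon g)^{1/\alpha}$ and define $\psi:=\phi^{-1}$ via the inverse function theorem; this is conceptually transparent (the $\alpha$-th root does all the normalization) but costs you two extra verifications the paper avoids, namely that the last component of $\phi^{-1}$ is divisible by $\y_m$ with analytic quotient and that this quotient is nonvanishing. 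Both of those you handle correctly (the divisibility needs $U$ shrunk to, say, a box so the quotient is globally analytic, and your identity $u_m=1/(v\circ\psi)$ settles the nonvanishing), and your sign bookkeeping $h\circ\psi=\epsilon\y_m^\alpha$ matches the paper's $\pm\y_m^\alpha$. In short: the paper trades your explicit root-plus-inversion for a one-line implicit equation that delivers $u_m$ already in the desired shape; your version is slightly longer but makes the mechanism (rescaling the last coordinate by $g^{1/\alpha}$) more visible.
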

\begin{proof}
Let $a:=g(0)$ and assume that $a>0$. We write $b:=\frac{1}{\sqrt[\alpha]{a}}$. Consider the analytic equation 
$$
F(\y_1,\ldots,\y_m,\t):=(b+\t)^\alpha g(\y_1,\ldots,\y_{m-1},\y_m(b+\t))-1.
$$
Observe that $F(0,0)=b^\alpha a-1=0$ and
$$
\frac{\partial F}{\partial\t}=\alpha(b+\t)^{\alpha-1}g(\y_1,\ldots,\y_{m-1},\y_m(b+\t))-(b+\t)^\alpha\frac{\partial g}{\partial\y_m}(\y_1,\ldots,\y_{m-1},\y_m(b+\t))\y_m.
$$
Thus, $\frac{\partial F}{\partial\t}(0,0)=\alpha b^{\alpha-1}a=\frac{\alpha}{b}\neq0$. By the implicit function theorem there exists $\xi\in\R\{\y\}$ such that $\xi(0)=0$ and $F(\y,\xi(\y))=0$. Consider the local change of coordinates 
$$
\psi:(\y_1,\ldots,\y_m)\mapsto(\y_1,\ldots,\y_{m-1},\y_m(b+\xi))
$$ 
around the origin. We have
$$
(h\circ\psi)(\y_1,\ldots,\y_m)=\y_m^{\alpha}(b+\xi)^\alpha g(\y_1,\ldots,\y_{m-1},\y_m(b+\xi))=\y_m^{\alpha}.
$$
In addition, $\psi$ keeps invariant the coordinate hyperplanes, as required.
\end{proof}

Let us study next from the local point of view the sets of critical points and local extrema of locally normal crossings analytic functions. 

\begin{lem}\label{localdes}
Let $W\subset\R^m$ be a connected open set, let $h\in\an(W)$ be an analytic function that vanish nowhere on $W$ and let $\alpha_1,\ldots,\alpha_r\geq2$ be positive integers, where $1\leq r\leq m$. Define $f:=\x_1^{\alpha_1}\cdots\x_r^{\alpha_r}\x_{r+1}\cdots\x_dh\in\an(W)$, where $r\leq d\leq m$, and assume that $\alpha_1,\ldots,\alpha_\ell$ are even whereas $\alpha_{\ell+1},\ldots,\alpha_r$ are odd. Then there exists an open neighborhood $U\subset W$ of $X:=\bigcup_{i=1}^r\{\x_i=0\}$ such that set of critical points of $f|_U$ is 
$$
\bigcup_{r+1\leq i<j\leq d}\{\x_i=0,\x_j=0\}\cup\bigcup_{i=1}^r\{\x_i=0\}.
$$
In addition, if $g$ takes (only) positive values on $W$, the set of local maxima of $f$ is 
$$
\bigcup_{i=1}^\ell\{\x_i=0\}\cap\{\x_{\ell+1}\cdots\x_d<0\}
$$ 
whereas the set of local minima of $f$ is
$$
\bigcup_{i=1}^\ell\{\x_i=0\}\cap\{\x_{\ell+1}\cdots\x_d>0\}.
$$
\end{lem}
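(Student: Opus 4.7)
My plan is to compute the gradient of $f$ and then carry out a local analysis around each point of $X$. For $k\in\{1,\ldots,r\}$ one computes
\[
\frac{\partial f}{\partial\x_k}=\x_k^{\alpha_k-1}\prod_{\substack{i\le r\\i\ne k}}\x_i^{\alpha_i}\prod_{j=r+1}^d\x_j\cdot\Bigl(\alpha_k h+\x_k\frac{\partial h}{\partial\x_k}\Bigr),
\]
and analogous factorizations hold for $k\in\{r+1,\ldots,d\}$ (with residual factor $h+\x_k\partial h/\partial\x_k$) and for $k>d$ (the full monomial $\prod\x_i^{\alpha_i}\prod\x_j$ times $\partial h/\partial\x_k$). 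The key observation is that each residual factor $\alpha_k h+\x_k\partial h/\partial\x_k$ (resp.\ $h+\x_k\partial h/\partial\x_k$) restricts to $\alpha_k h$ (resp.\ $h$) on the hyperplane $\{\x_k=0\}$, so it is nonzero on some neighborhood of that single hyperplane.

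Because different residual factors are only controlled near different hyperplanes, there is no a priori single neighborhood of the union $X$ on which all of them simultaneously avoid zero; this will be the main obstacle. To get around it I would localize at each $p\in X$: write $I:=\{i\in\{1,\ldots,d\}:\x_i(p)=0\}=I_1\sqcup I_2$ with $I_1\subset\{1,\ldots,r\}$ (necessarily nonempty) and $I_2\subset\{r+1,\ldots,d\}$, and absorb into a single nowhere-vanishing analytic factor $\tilde h$ those factors $\x_i^{\alpha_i}$ with $i\le r$, $i\notin I$ and linear factors $\x_j$ with $r<j\le d$, $j\notin I$, so that $f=\prod_{i\in I_1}\x_i^{\alpha_i}\prod_{j\in I_2}\x_j\cdot\tilde h$ on a small open $V_p\subset W$. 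Shrinking $V_p$ I may further assume that $\x_i$ is nowhere zero on $V_p$ for each $i\le d$ with $i\notin I$, that $\alpha_k\tilde h+\x_k\partial\tilde h/\partial\x_k$ is nowhere zero on $V_p$ for $k\in I_1$, and that $\tilde h+\x_k\partial\tilde h/\partial\x_k$ is nowhere zero on $V_p$ for $k\in I_2$. A case analysis of when every $\partial f/\partial\x_k(p')$ vanishes at $p'\in V_p$ then yields that $p'$ is critical iff some $\x_i(p')=0$ with $i\in I_1$, or $\x_j(p')=\x_{j'}(p')=0$ for two distinct $j,j'\in I_2$. Setting $U:=\bigcup_{p\in X}V_p$ and using the nonvanishing on $V_p$ of the absorbed coordinates to pass from the local indices $I_1,I_2$ to the global ones, one recognizes the critical set of $f|_U$ as the claimed union.

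For the extrema under the hypothesis $h>0$, I would observe that on $V_p\setminus\{\x_1\cdots\x_d=0\}$ one has $\mathrm{sign}(f)=\mathrm{sign}(\x_{\ell+1}\cdots\x_d)$, since the even-exponent factors $\x_i^{\alpha_i}$ ($1\le i\le\ell$) are strictly positive and each odd-exponent factor $\x_i^{\alpha_i}$ ($\ell<i\le r$) carries the sign of $\x_i$. A local extremum is critical, and the case analysis of step~2 forces every critical point in $U$ to satisfy $f=0$; thus any local maximum $p$ has $f(p)=0$. The condition $f(p')\le0$ in a neighborhood of $p$ then forces $\mathrm{sign}(\x_{\ell+1}(p')\cdots\x_d(p'))$ to be locally constant and nonpositive, which happens only if every $\x_j(p)$ with $\ell<j\le d$ is nonzero and $\x_{\ell+1}(p)\cdots\x_d(p)<0$; combined with $f(p)=0$ this forces $\x_i(p)=0$ for some $i\in\{1,\ldots,\ell\}$, yielding the stated set of local maxima. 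The set of local minima is obtained by reversing the strict inequality.
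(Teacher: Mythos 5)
Your proof is correct, and while it shares the paper's overall scheme --- a pointwise analysis at each $p\in X$, with $U$ assembled as a union of local neighborhoods $V_p$ --- the technical engine is genuinely different. The paper first applies Lemma \ref{change} (an implicit-function-theorem change of coordinates) to normalize $f$ near $p$ to $\pm$ a pure monomial in the coordinates vanishing at $p$, and then reads the critical set and the extrema off that monomial; you instead compute $\nabla f$ directly, factor each partial as a monomial times a residual factor such as $\alpha_k h+\x_k\partial h/\partial\x_k$, and shrink $V_p$ so that the relevant residual factors are units. Your diagnosis of the obstruction --- each residual factor is only controlled near its own hyperplane, so one must localize at points of $X$ rather than on all of $X$ at once --- is exactly the issue that forces the $U=\bigcup_p V_p$ construction in both arguments. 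For the extrema, your bookkeeping ($\mathrm{sign}(f)=\mathrm{sign}(\x_{\ell+1}\cdots\x_d)$ off the coordinate hyperplanes, local extrema are critical and hence lie in $\{f=0\}$, and a vanishing odd-exponent or linear coordinate at $p$ would let $f$ change sign arbitrarily close to $p$) is in substance the same sign-change argument the paper runs along a coordinate line through $p$; the reverse inclusion, which you leave implicit, is immediate from the same sign formula. What the paper's normalization buys is a clean monomial form that is reused in Theorem \ref{easydesc}; what your route buys is self-containedness, since Lemma \ref{change} is never needed. Two minor points you handled correctly without comment: the hypothesis ``$g$ takes only positive values'' is a typo for $h$, and the assertion about local maxima/minima must be read for $f|_U$ rather than for $f$ on all of $W$ (e.g.\ $\x^2e^{-\x^2}$ has local maxima at $\pm1$), which your phrasing in terms of critical points in $U$ implicitly respects, as does the paper's own proof.
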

\begin{proof}
Pick a point $x\in X$ and assume that $x\in\{\x_i=0\}$ exactly for the indices $i=1,\ldots,s\leq\ell$, $i=\ell+1,\ldots,k\leq r$ and $i=r+1,\ldots,e\leq d$ (in order to clarify, note that $x\not\in\{\x_i=0\}$ for the indices $i=s+1,\ldots,\ell$, $i=k+1,\ldots,r$ and $i=e+1,\ldots,d$). Define 
$$
g_x:=(\x_{s+1}^{\alpha_{s+1}}\cdots\x_\ell^{\alpha_\ell})(\x_{k+1}^{\alpha_{k+1}}\cdots\x_r^{\alpha_r})(\x_{e+1}\cdots\x_d)h.
$$
By Lemma \ref{change} for each point there exist an open neighborhood $U^x\subset W$ of $x$, an open neighborhood $V_x\subset\R^m$ of the origin and a change of coordinates $\psi_x:V_x\to U^x$ that maps the origin to $x$, keeps invariant the coordinate hyperplanes through $x$ and satisfies 
$$
f_x:=(f\circ\psi_x)(\y_1,\ldots,\y_m)=\pm(\y_1^{\alpha_1}\cdots\y_s^{\alpha_s})(\y_{\ell+1}^{\alpha_{\ell+1}}\cdots\y_k^{\alpha_k})(\y_{r+1}\cdots\y_e).
$$
The sign corresponding to $f_x$ is $+$ if $g_x(x)>0$ and $-$ if $g_x(x)<0$. Define $U:=\bigcup_{x\in X}U^x\subset W$. The set of critical points of $f|_U$ coincides with the union of the critical points of $f_x$ for each $x\in X$. We have
$$
\frac{\partial f_x}{\partial\y_i}=
\begin{cases}
\alpha_i\frac{f_x}{\y_i}&\text{if $i=1,\ldots,s$ or $i=\ell+1,\ldots,k$},\\
\frac{f_x}{\y_i}&\text{if $i=r+1,\ldots,e$},\\
0&\text{if $i=s+1,\ldots,\ell$, $i=k+1,\ldots,r$ or $i=e+1,\ldots,m$},
\end{cases}
$$
so the set of critical points of $f_x$ is 
$$
\Big\{\frac{\partial f_x}{\partial\y_1}=0,\ldots,\frac{\partial f_x}{\partial\y_m}=0\Big\}=\bigcup_{i=1}^s\{\y_i=0\}\cup\bigcup_{i=\ell+1}^k\{\y_i=0\}\cup\bigcup_{r+1\leq i<j\leq e}\{\y_i=0,\y_j=0\}.
$$
Thus, the set of critical points of $f|_U$ is
$$
\bigcup_{r+1\leq i<j\leq d}\{\x_i=0,\x_j=0\}\cup\bigcup_{i=1}^r\{\x_i=0\}.
$$
Assume $x\in\bigcup_{i=\ell+1}^d\{\x_i=0\}$. We can suppose that either $x\in\{\x_{\ell+1}=0\}$ or $x\in\{\x_{r+1}=0\}$. If $x\in\{\x_{\ell+1}=0\}$, we write $\lambda_\t:=(\lambda_1,\ldots,\lambda_\ell,\t,\lambda_{\ell+2},\ldots,\lambda_m)$ and
$$
f(x+\lambda_\t)=\t^{\alpha_{\ell+1}}\prod_{i=1}^\ell(x_i+\lambda_i)^{\alpha_i}\prod_{i=\ell+2}^r(x_i+\lambda_i)^{\alpha_i}\prod_{i=r+1}^d(x_i+\lambda_i)\cdot h(x+\lambda_\t).
$$
Pick $\lambda_1,\ldots,\lambda_\ell,\lambda_{\ell+2},\ldots,\lambda_m\in\R$ small enough such that
$$
a:=\prod_{i=1}^\ell(x_i+\lambda_i)^{\alpha_i}\prod_{i=\ell+2}^r(x_i+\lambda_i)^{\alpha_i}\prod_{i=r+1}^d(x_i+\lambda_i)\neq0
$$
and the sign of $h(x+\lambda_\t)$ coincides with that of $h(x)$ around $\t=0$. Thus, $f(x+\lambda_\t)=a\t^{\alpha_{\ell+1}}h(x+\lambda_\t)$, which changes sign around $\t=0$. As $f(x)=0$, this means that $f$ does not have a local extremum at $x$. If $x\in\{\x_{r+1}=0\}$, the discussion is analogous. This means that the set of local extrema of $f|_U$ is contained in 
$$
\Big(\bigcup_{r+1\leq i<j\leq d}\{\x_i=0,\x_j=0\}\cup\bigcup_{i=1}^r\{\x_i=0\}\Big)\setminus\bigcup_{i=\ell+1}^d\{\x_i=0\}=\bigcup_{i=1}^\ell\{\x_i=0\}\cap\{\x_{\ell+1}\cdots\x_d\neq0\}.
$$
Assume that $g$ only takes positive values on $W$ and let $x\in\bigcup_{i=1}^\ell\{\x_i=0\}\cap\{\x_{\ell+1}\cdots\x_d\neq0\}$. Let us check that $x$ is either a local maximum or a local minimum of $f$. If $x\in\{\x_{\ell+1}\cdots\x_d<0\}$, we may assume that $x\in\{\x_i=0\}$ exactly if $i=1,\ldots,s\leq\ell$ and
$$
(f\circ\psi_x)=-\y_1^{\alpha_1}\cdots\y_s^{\alpha_s}.
$$
As the exponents $\alpha_1,\ldots,\alpha_s$ are all even, the point $x$ is a local maximum of $f$. Analogously, if $x\in\{\x_{\ell+1}\cdots\x_d>0\}$, the point $x$ is a local minimum of $f$, as required.
\end{proof}

We use the previous lemma to describe the sets of local extrema of locally normal crossings analytic functions.

\begin{thm}\label{easydesc}
Let $M$ be a real analytic manifold and let $f\in\an(M)$ be a locally normal crossing real analytic function. Then, there exist real analytic functions $h,g\in\an(M)$ such that ${\rm Max}_0(f)=\{h=0\}\cap\{g<0\}$ and ${\rm Min}_0(f)=\{h=0\}\cap\{g>0\}$. 
\end{thm}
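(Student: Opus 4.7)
The plan is to globalize the local description in Lemma \ref{localdes} by producing two specific functions $h,g\in\an(M)$. In a local chart centered at any $x_0\in M$ we have $f=\prod_{i=1}^{d}x_i^{\alpha_i}\cdot h_0$ with $h_0$ a nowhere vanishing unit; writing $\beta_i:=\lfloor\alpha_i/2\rfloor$ yields the decomposition
\begin{equation*}
f=\biggl(\prod_{i=1}^{d}x_i^{\beta_i}\biggr)^{2}\cdot\prod_{\alpha_i\text{ odd}}x_i\cdot h_0.
\end{equation*}
The squared factor is nonnegative and vanishes exactly on those components of $\{f=0\}$ of multiplicity at least two, while the remaining factor encodes the sign information that Lemma \ref{localdes} uses to separate local maxima from local minima. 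My aim is to build $h\in\an(M)$ realizing the squared factor globally (in particular with $h\geq 0$ on $M$) and then to set $g:=f/h$.

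The construction of $h$ is the central step. On an open cover $\{U_\lambda\}$ of $M$ by adapted coordinate neighborhoods for $f$, put $h_\lambda:=\prod_{i}x_{\lambda,i}^{2\beta_i}$; each $h_\lambda$ is nonnegative, and all share the common divisor $D:=\sum_{i}2\beta_iD_i$ along the irreducible components $D_i$ of $\{f=0\}$. The transitions $h_\lambda/h_\mu$ are therefore nowhere vanishing \emph{positive} analytic units on $U_\lambda\cap U_\mu$. Taking logarithms converts this multiplicative cocycle into an additive one in the sheaf $\an$; since $M$ embeds as a closed analytic submanifold of some $\R^n$ by Grauert, Cartan's Theorem B gives $H^{1}(M,\an)=0$, so the additive cocycle is a coboundary. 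Exponentiating back produces positive units $u_\lambda$ with $u_\lambda/u_\mu=h_\mu/h_\lambda$, and the recipe $h:=u_\lambda h_\lambda$ then defines a single global analytic function, nonnegative on $M$ and with $\mathrm{div}(h)=D$.

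With $h$ in hand, $g:=f/h$ is analytic on $M$ because $\mathrm{div}(h)\leq\mathrm{div}(f)$, and in each adapted chart one computes $g=\prod_{\alpha_i\text{ odd}}x_i\cdot(h_0/u_\lambda)$ with $u_\lambda>0$. Consequently $\{g=0\}$ is the union of the odd-multiplicity components of $\{f=0\}$ (each with multiplicity one), and off this set $\mathrm{sign}(g)$ agrees with the sign of $\prod_{\alpha_i\text{ odd}}x_i\cdot h_0$. A case split on the sign of $h_0$ (using ${\rm Max}_0(f)={\rm Min}_0(-f)$ to reduce to $h_0>0$) combined with Lemma \ref{localdes} identifies
\begin{align*}
\{h=0\}\cap\{g<0\}\cap U&=\bigcup_{\alpha_i\text{ even}}\{x_i=0\}\cap\Bigl\{\prod_{\alpha_i\text{ odd}}x_i\cdot h_0<0\Bigr\}\\
&={\rm Max}_0(f)\cap U
\end{align*}
in every chart; the same argument with the sign reversed gives ${\rm Min}_0(f)=\{h=0\}\cap\{g>0\}$.

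The main obstacle is the nonnegative global construction of $h$. The nonnegativity is not cosmetic: if $h$ changed sign between charts, $g=f/h$ would not have a coordinate-independent sign, and the identification of $\{g<0\}$ with the set of local maxima would break along the sign-flip locus. The ordinary Cousin II problem for an analytic divisor carries an obstruction in $H^{1}(M,\Z/2)$ that need not vanish, but restricting to nonnegative local models forces the transition data into the subsheaf of strictly positive units; via the logarithm this collapses to the additive Cartan problem, which is solved by Theorem B. Once $h$ is produced, the remaining assertions reduce to pointwise verifications against Lemma \ref{localdes}.
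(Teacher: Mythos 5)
Your proof is correct, and its endgame coincides with the paper's: divide $f$ by a globally defined nonnegative analytic function $h$ carrying the ``even part'' of the divisor of $f$, set $g:=f/h$, and identify ${\rm Max}_0(f)=\{h=0\}\cap\{g<0\}$ chart by chart via Lemma \ref{localdes}. Where you genuinely diverge is in how $h$ is produced. The paper forms the coherent sheaf of ideals $\Ff$ whose stalk at $x$ is generated by $\prod_i h_{x,i}^{k_i}$ along the even-multiplicity components $Z_i$ (writing $m_i=2k_i$), invokes Coen's theorem to get finitely many global generators $h_1,\ldots,h_r\in\an(M)$, and simply sets $h:=h_1^2+\cdots+h_r^2$: writing $h_j=a_{j,x}\prod_i h_{x,i}^{k_i}$ locally, the $a_{j,x}$ generate the unit ideal, so $\sum_j a_{j,x}^2$ is a positive unit and $h$ is nonnegative with exactly the intended divisor. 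You instead solve a multiplicative Cousin problem restricted to strictly positive units, converting it by logarithms into an additive Cartan problem killed by $H^1(M,\an)=0$. Both arguments ultimately rest on Cartan theory for real analytic manifolds; the sum-of-squares device is shorter and bypasses the cocycle bookkeeping as well as the verification (which your route does need, though it is standard) that each transition $h_\lambda/h_\mu$ extends to a \emph{positive} analytic unit across the divisor, using principality of the ideal of a smooth hypersurface germ, the well-definedness of the multiplicities $m_i$ as valuations, and density of the complement of the divisor to propagate the sign. In exchange, your route isolates cleanly why the usual $H^1(M,\Z/2)$ obstruction to Cousin II disappears here. One cosmetic difference: with $\beta_i=\lfloor\alpha_i/2\rfloor$ your $h$ also vanishes (to even order) on the odd-multiplicity components with $m_i\geq3$, whereas the paper's $h$ vanishes only on the even-multiplicity ones; this is harmless because $\{g<0\}$ and $\{g>0\}$ are disjoint from every odd-multiplicity component, so the intersections with $\{h=0\}$ agree in both constructions.
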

\begin{proof}
Let $\{Z_i\}_{i\in I}$ be the collection of the irreducible components of the coherent hypersurface $Z$. Observe that each $Z_i$ is a hypersurface of $M$. For each $i\in I$ denote $m_i$ the multiplicity of $f$ along the hypersurface $Z_i$ (recall that the multiplicity along a hypersurface is a discrete valuation \cite[p.300]{ad}). Denote 
\begin{align*}
&I_+:=\{i\in I:\ m_i \text{ is even}\},\\
&I_-:=\{i\in I:\ m_i \text{ is odd}\}.
\end{align*}
Let $Z_+:=\bigcup_{i\in I^+}Z_i$ and $Z_-:=\bigcup_{i\in I_-}Z_i$. Write $m_i:=2k_i$ for each $i\in I_+$ and let $h_{x,i}$ be a local generator of the ideal $\Jj(Z_{i,x})$ of germs at $x$ of analytic functions on $M$ vanishing identically on $Z_{i,x}$. Consider the coherent sheaf of ideals
$$
\Ff_x:=\begin{cases}
\prod_{i\in I^+:\ x\in Z_i}h_{x,i}^{k_i}\an_{M,x}&\text{if $x\in Z_+$,}\\
\an_{M,x}&\text{otherwise}.
\end{cases}
$$
By \cite{co} there exist finitely many global sections $h_1,\ldots,h_r\in\an(M)$ that generates the sheaf of ideals $\Ff$. Observe that $h:=h_1^2+\cdots+h_r^2$ divides $f$, that is, there exists an analytic function $g\in\an(M)$ such that $f=gh$. Note that $\{g=0\}=Z_+$ and $g$ changes sign at each point $x\in Z_+$. By Lemma \ref{localdes} we conclude
\begin{align*}
{\rm Max}(f)\cap U&={\rm Max}_0(f)=\{h=0\}\cap\{g<0\},\\
{\rm Min}(f)\cap U&={\rm Min}_0(f)=\{h=0\}\cap\{g>0\},
\end{align*} 
as required.
\end{proof}

A main tool in the study of continuous subanalytic functions on closed subanalytic subsets of a real analytic manifold $M$ is local uniformization \cite[\S4]{bm}. If we combine \cite[Thm. 0.1, Cor. 4.9, Lem. 5.3]{bm} we have the following result.

\begin{thm}[(Local uniformization, {\cite{bm}})]\label{ut2}
Let $X\subset M$ be a closed subanalytic subset and let $f:X\to\R$ be a continuous subanalytic function. For each $\lambda\in\R$ there exist a real analytic manifold $N_\lambda$ and a proper surjective real analytic map $\pi_\lambda:N_\lambda\to X\subset M$ such that $(f-\lambda)\circ\pi_\lambda$ is an analytic map on $N_\lambda$ that is locally normal crossings.
\end{thm}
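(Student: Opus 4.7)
Since the statement is explicitly attributed to Bierstone--Milman and the excerpt tells us it follows by combining Thm.~0.1, Cor.~4.9 and Lem.~5.3 of \cite{bm}, my plan is simply to describe how those three ingredients fit together, rather than to reprove the resolution machinery.

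First, I would fix $\lambda\in\R$ and apply the uniformization theorem (Thm.~0.1 of \cite{bm}) to the closed subanalytic set $X\subset M$: this provides a real analytic manifold $N'$ and a proper surjective real analytic map $\sigma:N'\to X\subset M$. The pull-back $(f-\lambda)\circ\sigma$ is a continuous subanalytic function on $N'$, but a priori only subanalytic, not analytic. To promote it to an analytic function I would uniformize once more. Because $X$ is closed in $M$, $f$ is continuous and $\sigma$ is proper, the graph $\Gamma$ of $(f-\lambda)\circ\sigma$ is a closed subanalytic subset of the real analytic manifold $N'\times\R$. Applying uniformization to $\Gamma$ and composing with the restriction of the second projection $N'\times\R\to\R$ yields a real analytic manifold $N''$, a proper surjective analytic map $\tau:N''\to X$ (through $\sigma$ and the first projection), and a genuinely analytic function $F:=(f-\lambda)\circ\tau\in\an(N'')$.

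Next I would invoke the principalization / embedded desingularization statements in \cite{bm}, namely Cor.~4.9 together with the technical Lem.~5.3: applied to the analytic function $F$ on $N''$ they produce a finite composition of blow-ups along smooth analytic centres $\mu:N_\lambda\to N''$ such that $F\circ\mu$ is locally normal crossings in the sense recalled just before the statement. Setting $\pi_\lambda:=\tau\circ\mu:N_\lambda\to X\subset M$, properness and surjectivity are preserved (both $\tau$ and $\mu$ are proper and surjective, and properness is closed under composition), while by construction $(f-\lambda)\circ\pi_\lambda=F\circ\mu$ is a locally normal crossings analytic function on $N_\lambda$.

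The only real obstacle, if one wanted to reprove this from scratch rather than cite, would be the last step: ensuring that an arbitrary analytic function on a real analytic manifold can be turned into a locally normal crossings one after a proper modification. This is exactly the content of Hironaka-type desingularization in the real analytic category, which is what Cor.~4.9 and Lem.~5.3 of \cite{bm} supply; the first two steps above are comparatively elementary consequences of uniformization once those desingularization results are available.
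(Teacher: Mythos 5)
Your proposal is correct and follows essentially the same route as the paper, which offers no independent argument for this theorem beyond the instruction to combine \cite[Thm.~0.1, Cor.~4.9, Lem.~5.3]{bm}; your sketch of how uniformization of $X$, uniformization of the graph of the pulled-back function, and desingularization to normal crossings fit together is a reasonable and accurate unpacking of that citation. The only minor caveats are that the final modification is in general a \emph{locally finite} (rather than finite) sequence of blow-ups when $N''$ is non-compact, and that components on which $(f-\lambda)\circ\tau$ vanishes identically must be treated separately since the normal-crossings definition excludes them, but neither affects the substance.
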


The previous result allows us to provide an alternative description, that does not involve closures, of the level sets ${\rm Max}_\lambda(f)$ of a continuous subanalytic function $f:X\to\R$ on a closed subanalytic subset $X$ of a real analytic manifold $M$. Of course this description preserves the subanalyticity in $M$ of each set ${\rm Max}_\lambda(f)$.

\begin{cor}\label{altdesc}
Let $X\subset M$ be a closed subanalytic subset and let $f:X\to\R$ be a continuous subanalytic function. Fix $\lambda\in\R$ and let $N_\lambda$ be a real analytic manifold. Let $\pi_\lambda:N_\lambda\to X\subset M$ be a proper surjective real analytic map such that $(f-\lambda)\circ\pi_\lambda$ is an analytic map on $N_\lambda$ that is locally normal crossings. Then for each $\lambda\in\R$ there exist analytic functions $h_\lambda,g_\lambda\in\an(N_\lambda)$ such that
\begin{multline}\label{altdesceq}
{\rm Max}_\lambda(f)=\{f=\lambda\}\setminus\pi_\lambda(\{f\circ\pi_\lambda=\lambda\}\setminus{\rm Max}(f\circ\pi_\lambda))\\
=\{f=\lambda\}\setminus\pi_\lambda(\{f\circ\pi_\lambda=\lambda\}\setminus\{h_\lambda=0,g_\lambda<0\}).
\end{multline}
\end{cor}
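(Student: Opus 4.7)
The plan is to transfer the computation of ${\rm Max}_\lambda(f)$ from $X$ to the uniformizing manifold $N_\lambda$ by means of Lemma \ref{equiv}, and then use Theorem \ref{easydesc} to rewrite the resulting upstairs set in terms of explicit analytic functions $h_\lambda$ and $g_\lambda$.

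For the first identity I would begin by noting that $\pi_\lambda\colon N_\lambda\to X$ is a continuous, proper, surjective map between Hausdorff spaces, hence a closed surjection. Lemma \ref{equiv} then gives that $x_0\in X$ belongs to ${\rm Max}(f)$ if and only if $\pi_\lambda^{-1}(x_0)\subset{\rm Max}(f\circ\pi_\lambda)$. Whenever $f(x_0)=\lambda$ the fiber $\pi_\lambda^{-1}(x_0)$ is automatically contained in $\{f\circ\pi_\lambda=\lambda\}$, so this inclusion is equivalent to
\[
\pi_\lambda^{-1}(x_0)\cap\bigl(\{f\circ\pi_\lambda=\lambda\}\setminus{\rm Max}(f\circ\pi_\lambda)\bigr)=\varnothing,
\]
which in turn amounts to $x_0\notin\pi_\lambda(\{f\circ\pi_\lambda=\lambda\}\setminus{\rm Max}(f\circ\pi_\lambda))$. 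Intersecting with the level set $\{f=\lambda\}$ yields the first equality in \eqref{altdesceq}.

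For the second equality I would apply Theorem \ref{easydesc} to the locally normal crossings analytic function $F:=(f-\lambda)\circ\pi_\lambda\in\an(N_\lambda)$ furnished by the hypothesis. This produces $h_\lambda,g_\lambda\in\an(N_\lambda)$ with ${\rm Max}_0(F)=\{h_\lambda=0\}\cap\{g_\lambda<0\}$. Since adding the constant $\lambda$ does not alter the set of local maxima, ${\rm Max}(F)={\rm Max}(f\circ\pi_\lambda)$, and consequently
\[
{\rm Max}(f\circ\pi_\lambda)\cap\{f\circ\pi_\lambda=\lambda\}={\rm Max}_0(F)=\{h_\lambda=0,\,g_\lambda<0\}.
\]
The elementary set-theoretic identity $A\setminus B=A\setminus(A\cap B)$, applied with $A:=\{f\circ\pi_\lambda=\lambda\}$ and $B:={\rm Max}(f\circ\pi_\lambda)$, then allows one to replace $\{f\circ\pi_\lambda=\lambda\}\setminus{\rm Max}(f\circ\pi_\lambda)$ by $\{f\circ\pi_\lambda=\lambda\}\setminus\{h_\lambda=0,\,g_\lambda<0\}$ inside the first description, which is precisely the second equality in \eqref{altdesceq}.

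The only delicate step I foresee is the bookkeeping behind the use of Lemma \ref{equiv}: one must carefully translate the fiberwise characterization of local maxima into a set-theoretic identity in $X$ by passing from the condition ``$\pi_\lambda^{-1}(x_0)$ avoids a given subset of $N_\lambda$'' to ``$x_0$ avoids the $\pi_\lambda$-image of that subset''. Once this is in place, invoking Theorem \ref{easydesc} and performing the elementary rewriting above are immediate.
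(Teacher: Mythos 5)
Your proposal is correct and follows essentially the same route as the paper: Lemma \ref{equiv} (applicable because the proper continuous surjection $\pi_\lambda$ onto the locally compact Hausdorff space $X$ is closed) gives the fiberwise characterization yielding the first equality, and Theorem \ref{easydesc} applied to $(f-\lambda)\circ\pi_\lambda$ gives $h_\lambda,g_\lambda$ for the second. Your extra care in passing from ${\rm Max}_0((f-\lambda)\circ\pi_\lambda)$ to ${\rm Max}(f\circ\pi_\lambda)$ via the identity $A\setminus B=A\setminus(A\cap B)$ is a welcome clarification of a step the paper leaves implicit, but it is not a different argument.
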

\begin{proof}
By Lemma \ref{equiv} a point $x\in Z_\lambda:=\{f=\lambda\}$ is a local maximum if and only if each point $y\in\pi_\lambda^{-1}(x)$ belongs to ${\rm Max}_\lambda(f\circ\pi_\lambda)$. Thus, 
$$
Z_\lambda\setminus{\rm Max}_\lambda(f)=\pi_\lambda(\{f\circ\pi_\lambda=\lambda\}\setminus{\rm Max}(f\circ\pi_\lambda)),
$$ 
or equivalently, 
$$
{\rm Max}_\lambda(f)=\{f=\lambda\}\setminus\pi_\lambda(\{f\circ\pi_\lambda=\lambda\}\setminus{\rm Max}(f\circ\pi_\lambda)).
$$
By Theorem \ref{easydesc} there exist analytic functions $h_\lambda,g_\lambda\in\an(N_\lambda)$ such that ${\rm Max}(f\circ\pi_\lambda)=\{h_\lambda=0,g_\lambda<0\}$ and the last equality in \eqref{altdesceq}, as required.
\end{proof}

\end{document}